\newcommand{\inty}{interpretability\xspace}
\newcommand{\formal}[1]{\ensuremath{{\sf {#1}}\xspace}}
\newcommand{\principle}[1]{\formal{#1}}
\newcommand{\eqbydef}{:=}
\newcommand{\con}{\bigwedge \hspace{-2.5mm} \bigwedge}
\newcommand{\dis}{\bigvee \hspace{-2.5mm} \bigvee}
\newcommand{\cone}[2]{\mathcal{C}_{#1}^{#2}}
\newcommand{\bis}[1]{{\ensuremath{\mathcal{S}_{#1}}}\xspace}
\newcommand{\depth}[1]{{\ensuremath{{\sf depth}(#1)}}\xspace}
\newcommand{\cebe}{{\ensuremath{\mathcal{C}_{\principle{B}}}}\xspace}
\newcommand{\ea}{\ensuremath{{\rm{EA}}}\xspace}
\newcommand{\pra}{\ensuremath{{\mathrm{PRA}}}\xspace}
\newcommand{\pa}{\ensuremath{{\mathrm{PA}}}\xspace}
\newcommand{\isig}[1]{{\ensuremath {\mathrm{I}\Sigma_{#1}}}\xspace}
\newcommand{\idel}[1]{{\ensuremath {\mathrm{I}\Delta_{#1}}}\xspace}
\newcommand{\ir}[1]{{\ensuremath{\mathrm{I}\Sigma^R_{#1}}}\xspace}
\newcommand{\il}{{\ensuremath{\textup{\textbf{IL}}}}\xspace}
\newcommand{\extil}[1]{\ensuremath{\textup{\textbf{IL}}{\sf\ensuremath{#1}}}\xspace}
\newcommand{\gl}{{\ensuremath{\textup{\textbf{GL}}}}\xspace}
\newcommand{\intl}[1]{{\ensuremath {\textup{\textbf{IL}}}({\rm #1})}}
\newcommand{\ilm}{\extil{M}}
\newcommand{\ilp}{\extil{P}}
\newcommand{\ilw}{\extil{W}}
\newcommand{\ilb}{\extil{B}}
\newcommand{\ilal}{\intl{All}\xspace}
\newtheorem{theorem}{Theorem}[section]
\newtheorem{lemma}[theorem]{Lemma}
\newtheorem{corollary}[theorem]{Corollary}
\newtheorem{definition}[theorem]{Definition}
\begin{document}



\title{Interpretability in \pra}

\author{Marta B\'{\i}lkov\'{a}, Dick de Jongh and Joost J. Joosten}
\date{2009}

\maketitle

%

\begin{abstract}
In this paper\footnote{We thank Lev Beklemishev for his help and suggestions. Evan Goris did a thorough proofread of an early draft and suggested a simplification of the notion of B-simulation. We thank Albert Visser for fruitful discussions and challenges.
We also thank Franco Montagna for his many contributions to the subject. 
Two unknown referees improved our paper considerably with their remarks and suggestions. Supported by grants GA CR 401/06/0387 and IAA900090703.}
from 2009 we study \intl{\pra}, the interpretability
  logic of \pra. As \pra is neither an essentially reflexive theory
  nor finitely axiomatizable, the two known arithmetical completeness
  results do not apply to \pra: \intl{\pra} is not \ilm or
  \ilp. \intl{\pra} does of course contain all the principles known to
  be part of \ilal, the interpretability logic of the principles
  common to all reasonable
  arithmetical theories. In this paper, we take two arithmetical
  properties of \pra and see what their consequences in the modal
  logic \intl{\pra} are. These properties are reflected in the
  so-called Beklemishev Principle \principle{B}, and Zambella's
  Principle \principle{Z}, neither of which is a part of \ilal. Both
  principles and their interrelation are submitted to a modal
  study. In particular, we prove a frame condition for
  \principle{B}. Moreover, we prove that \principle{Z} follows from a restricted form of
  \principle{B}. Finally, we give an overview of the known
  relationships 
  of \intl{\pra} to important other interpetability principles. 
\end{abstract}




\section{Introduction}

The notion of a relativized interpretation occurs in many places in
mathematics and in mathematical logic. If a theory $T$ interprets a
theory $S$, we shall write $T\rhd S$, which then, roughly, means
that there is a translation $\cdot^{t}$ from symbols in the language
of $S$ to formulas in the language of $T$ such that any theorem of $S$
becomes  a theorem of $T$ under the canonical extension of this
translation to formulas. In the notion of interpretation that we are
interested in, the logical structure of formulas has to be preserved
under the translation. Thus, for example,  
$(\varphi \vee \psi)^t = \varphi^t \vee \psi^t$ and in particular
$\bot^t=(\vee_{\emptyset})^t= \vee_{\emptyset} = \bot$. We refer the
reader to \cite{vis97}, \cite{DJ} and \cite{tars:unde53} for precise
definitions and examples.

In this paper, we shall not go 
much into the technical details of interpretations. Rather, we are
interested in the structural behavior of this notion of
interpretability. In particular, we are interested in the structural
behavior of interpretability on sentential extensions of a certain
base theory $T$. An easy example of such a structural property is the
transitivity of interpretations: 
\[
(T + \alpha \rhd T + \beta) \wedge (T + \beta \rhd T + \gamma) \rightarrow (T + \alpha \rhd T + \gamma).
\]
We can use so-called interpretability logics to capture, in a sense, the complete structural behavior of interpretability between sentential extensions of a certain base theory. We shall soon say a bit more on this. For now it is important to note that for a large collection of theories, the interpretability logic is known. 
\medskip

We call a theory \emph{reflexive} if it proves the consistency of any
of its finite sub-theories (as sets of axioms). We call a theory
\emph{essentially reflexive} if any finite sentential extension of it
is reflexive. It is easy to see that any theory with full induction,
like Peano Arithmetic, is essentially reflexive. The interpretability
logic of essentially reflexive theories was determined independently
by Berarducci and Shavrukov (\cite{bera:inte90},
\cite{shav:logi88}). We shall encounter this logic below under the
name of \ilm. The principle $(A\rhd B)\to (A\wedge\Box C\rhd
B\wedge\Box C)$ which is the particular feature of this system. It is
called Montagna's principle since it arose during the original discussions between
Franco Montagna and Albert Visser about the modal principles
underlying interpetability logic. It was known to Lindstr\"om and
\v{S}vejdar in arithmetic disguise before.

It turns out that theories which are finitely axiomatizable and which
contain a sufficient amount of arithmetic, have a different
interpretability logic which is called \ilp. In \cite{vis97}, the
first proof was given.

For no theory  that is neither finitely axiomatizable nor essentially reflexive, the interpretability logic is known. \pra is one such theory. In this paper, we shall make some first attempts to work out the interpretability logic of \pra.   

As such, this paper also fits into a larger project. As pointed out
above, different arithmetical theories have different interpretability
logics. A question that is open since a long time concerns the logic
of the core principles that pertain to \emph{all} reasonable
arithmetical theories - \ilal. As \pra is certainly a `reasonable
arithmetical theory', this core logic should also be a part of
\intl{\pra}. In this paper we shall not focus too much on the
principles in the core logic. Rather shall we consider the
interpretability behavior of \pra that is typical for this theory. 

One such principle that is characteristic for \pra is \emph{Beklemishev's} principle that shall be studied closely in this paper. This principle exploits the fact that any theory which is an extension of \pra by  $\Sigma_2$ sentences 
is reflexive. We give a characterization of this principle in terms of the modal semantics for interpretability logics.

\medskip

A topic that is closely related to interpretability logics, is that of
$\Pi_1$-conservativity logics. A theory $S$ is $\Pi_1$ conservative
over a theory $T$ in the same language of arithmetic, we shall write
$S\rhd_{\Pi_1}T$ whenever $S$ proves any $\Pi_1$ theorem that is
proven by $T$. In symbols: $T\vdash \pi \ \Longrightarrow S\vdash \pi$
for any $\pi \in \Pi_1$. It is easy to see that for any $\Sigma_1$
sentence $\sigma$, the following is a valid principle $S\rhd_{\Pi_1}T
\to S + \sigma \rhd_{\Pi_1}T + \sigma$. This principle is the basis for
Montagna's principle for interpretability logic, and  Beklemishev's
principle which is studied in this paper is a restriction of
Montagna's principle.

When $T$ and $S$ are both reflexive theories we have that $S\rhd T \
\leftrightarrow \ S\rhd_{\Pi_1}T$. This equivalence was exploited by H\'ajek and
Montagna who were the~first to show that the~$\Pi_1$-conservativity
logic of \pa is \ilm as well \cite{haje:cons90}. The observation about
the equivalence is more generally
important when looking at the repercussions of $\Pi_1$-conservativity
principles on interpretability logics. In this paper we shall consider
Zambella's principle for $\Pi_1$-conservativity logics and look at its
repercussions for the interpretability logic of \pra. We shall show
that Zambella does not add new information in the sense that its
modal-logical consequences are
already implied by Beklemishev's principle.  

It is remarkable that the notion of interpretability is, in a
sense, less stable than that of $\Pi_1$-conservativity.
H\'ajek and Montagna show that their results extends to all
reasonable theories containing \isig{1}. This was strengthened by
Beklemishev and Visser in \cite{Bekv04}: all theories extending the
parameter-free induction schema ${\mathrm I}\Pi_1^-$ have the same
$\Pi_1$-conservativity logic (\ilm) whereas in this range the interpretability
logics expose a~diverse and wild behavior. Note though that $\pra$ does not
prove ${\mathrm I}\Pi_1^-$, and, in fact, the $\Pi_1$-conservativity logic of \pra remains
unknown. 

A number of the results in this paper was first proved in \cite{jotesis}.


\section{Arithmetic}

Let us first fix some arithmetical notation. We use modal symbols
$\Box,\Diamond, \rhd$ both in modal and arithmetical statements, here
we fix their arithmetical meaning. We write, for an~arithmetical
sentence $\alpha$, $\Box_{\rm T}\alpha$ for formalized provability in
${\rm T}$, $\Box_{{\rm T},n}\alpha$ for formalized provability of
$\alpha$ in ${\rm T}$ using only non-logical axioms with G\"{o}del
numbers $\leq n$ and formulas of logical complexity $\leq n$ \footnote{Since \pra proves superexponentiation this is, \emph{in the case under study}, equivalent to the restriction of axioms to those $\leq n$}. Dually,
$\Diamond_{\rm T}\alpha = \neg\Box_{\rm T}\neg\alpha$ means formalized
consistency of $\alpha$ over 
${\rm T}$ (i.e.\ nonexistence of a~proof of a~contradiction from
$\alpha$), while
$\Diamond_{{\rm T},n}\alpha$ means $\neg\Box_{{\rm T},n}\neg\alpha$. 
For theories ${\rm T},{\rm S}$ we use ${\rm T}\rhd {\rm S}$ to denote
formalized interpretability of ${\rm S}$ in ${\rm T}$. For
arithmetical sentences $\alpha,\beta$, $\alpha\rhd_{\rm T}\beta$ means
${\rm T}+\alpha\rhd {\rm T} +\beta$. Similarly for theories ${\rm
  T},{\rm S}$, $\rhd_{\Pi_1}$ denotes formalized
$\Pi_1$-conservativity of ${\rm T}$ over ${\rm S}$ and for
arithmetical sentences $\alpha,\beta$, $\alpha\rhd_{\Pi_1}\beta$ means
${\rm T}+\alpha\rhd_{\Pi_1} {\rm T} +\beta$. 

\subsection{What is \pra?}

In the literature there are many definitions of \pra. Probably
the best known definition uses a language that contains a
function symbol for every primitive recursive function. The axioms
contain the defining equations of these functions. Moreover, there are
induction axioms for each $\Delta_0$-formula in this enriched
language.

Beklemishev has shown in \cite{Bek99b} that \pra is in a strong sense
equivalent (faithfully mutually interpretable) with
$(\ea)_{\omega}^2$. Here, $(\ea)_{\omega}^2$ is the theory that is
obtained by starting with \ea (= $\idel{0}+ {\sf exp}$) and iterating
`$\omega$ many times' $\Pi_2$-reflection. In symbols: $(\ea)^2_0=\ea$,
and $(\ea)^2_{n+1}= 
{\sf RFN}_{(\ea)_n^2}(\Pi_2)$.

In this paper, we shall use the definition:
\[
\pra := (\ea)_{\omega}^2.
\]
Under this definition, the following lemma is immediate.

\begin{lemma}\label{lemm:prarefl}
Any r.e.\ extension of \pra by $\Sigma^0_2$ sentences is reflexive.
\end{lemma}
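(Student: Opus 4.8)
The plan is to reduce reflexivity to a single application of $\Pi_2$-reflection. Write $T := \pra + \Gamma$ with $\Gamma$ the given r.e.\ set of $\Sigma^0_2$ sentences. Since $\pra = (\ea)^2_\omega = \bigcup_n (\ea)^2_n$ and the theories $(\ea)^2_n$ form an increasing chain (each $\Pi_2$-reflection step re-proves the $\Pi_2$-axiomatised theory it reflects), any finite subtheory $T_0 \subseteq T$ uses only finitely many axioms of $\pra$, all provable in some $(\ea)^2_n$, together with finitely many $\sigma_1,\dots,\sigma_k \in \Gamma$. Putting $\sigma := \sigma_1 \wedge \cdots \wedge \sigma_k$, again a $\Sigma^0_2$ sentence, we get $(\ea)^2_n + \sigma \vdash T_0$, so by monotonicity of consistency (provable already in \ea) it suffices to show $T \vdash \Con((\ea)^2_n + \sigma)$.

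The crux, which I would isolate as the main step, is that $\RFN_{(\ea)^2_n}(\Pi_2) + \sigma \vdash \Con((\ea)^2_n + \sigma)$. Reasoning inside this theory: as $\sigma$ is $\Sigma^0_2$, its negation $\neg\sigma$ is a $\Pi_2$ sentence, and by the formalised deduction theorem $\neg\Con((\ea)^2_n + \sigma)$ is equivalent to $\Box_{(\ea)^2_n}\neg\sigma$. The instance of $\Pi_2$-reflection for $\neg\sigma$ gives $\Box_{(\ea)^2_n}\neg\sigma \to \neg\sigma$; hence from $\neg\Con((\ea)^2_n + \sigma)$ we would obtain $\neg\sigma$, contradicting the available $\sigma$. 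This yields $\Con((\ea)^2_n + \sigma)$. To finish, observe that $\RFN_{(\ea)^2_n}(\Pi_2)$ is by definition the theory $(\ea)^2_{n+1} \subseteq \pra \subseteq T$, while $\sigma \in \Gamma \subseteq T$; thus $T$ proves both premises of the crux, and therefore $T \vdash \Con((\ea)^2_n + \sigma)$, as required.

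The step needing the most care is not the logic of the crux --- which is genuinely short, in line with the statement being called immediate --- but the surrounding bookkeeping: verifying that $\neg\sigma$ really falls into the class $\Pi_2$ that $\RFN_{(\ea)^2_n}(\Pi_2)$ reflects (so that exactly the right instance is available), and that the formalised deduction theorem together with the provable $\Box$-reflection of the single sentence $\neg\sigma$ are available already over \ea. Here the footnote's observation that $\pra$ proves superexponentiation is what keeps the provability formalisation well-behaved. One should also pin down that the iteration is cumulative, i.e.\ $(\ea)^2_n \subseteq (\ea)^2_{n+1}$, which holds because $\Pi_2$-reflection over a $\Pi_2$-axiomatised theory re-proves that theory; this legitimises both the union description $\pra = \bigcup_n (\ea)^2_n$ used in the reduction and the inclusion $(\ea)^2_{n+1} \subseteq \pra$ used at the end.
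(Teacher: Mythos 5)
Your proposal is correct and is essentially the paper's own (implicit) argument: the paper gives no proof at all, remarking only that under the definition $\pra := (\ea)^2_\omega$ the lemma is immediate, and what it leaves unsaid is precisely your reduction of the consistency of a finite subtheory to the standard fact that $\RFN_{(\ea)^2_n}(\Pi_2) + \sigma \vdash \Con((\ea)^2_n + \sigma)$ for $\Sigma^0_2$ sentences $\sigma$. Your surrounding bookkeeping (cumulativity of the hierarchy, $\neg\sigma \in \Pi_2$, the formalised deduction theorem available over \ea) fills in the details correctly.
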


\subsection{The Orey-H\'ajek Characterizations}

All theories that are mentioned here are supposed to be consistent and
have a poly-time recognizable axiomatization.  
Orey and H\'ajek have given several equivalent conditions on theories
which express that the one interprets the other. In this subsection we
shall briefly mention the one we shall need and refer to the
literature for proofs.   

\begin{lemma}
Whenever $T$ is reflexive we have that
\[
T \rhd S \ \ \Leftrightarrow \forall x\  T\vdash \neg\Box_{S,x}\bot 
\]
Moreover in the presence of
the totality of exponentiation this equivalence can be formalized. 
\[
 \vdash T \rhd S\leftrightarrow \forall x\ \Box_{T}\neg\Box_{S,x}\bot
\]


\end{lemma}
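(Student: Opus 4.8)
The plan is to prove the two directions of the first equivalence separately and then indicate how the whole argument is internalized once exponentiation is total. The easy direction is $(\Rightarrow)$; the real content sits in $(\Leftarrow)$, where I would invoke the arithmetized completeness theorem.

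For $(\Rightarrow)$, suppose $T \rhd S$ via a translation $\cdot^t$. The defining property of an interpretation is that it carries $S$-derivations to $T$-derivations while preserving the logical skeleton, so in particular $\bot^t = \bot$. Tracking resources, an $S$-proof of $\bot$ that uses only axioms and formulas below $x$ is transformed into a $T$-proof of $\bot$ whose size is bounded by an elementary function $f(x)$. This gives $T \vdash \Box_{S,x}\bot \to \Box_{T,f(x)}\bot$. Since $T$ is reflexive, it proves $\neg\Box_{T,f(x)}\bot$, and contraposition yields $T \vdash \neg\Box_{S,x}\bot$ for every standard $x$, as required. Note that partial provability is essential here: $T$ cannot prove its own full consistency, so the fine bookkeeping in $\Box_{T,f(x)}$ is what lets reflexivity do its work.

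For $(\Leftarrow)$, assume $T \vdash \neg\Box_{S,x}\bot$ for every $x$. I would construct the interpretation by the \emph{arithmetized completeness theorem}: inside a model $M \models T$ one defines a complete consistent Henkin extension of $S$ and reads off a definable model, which is exactly an interpretation. The difficulty is that this construction naturally wants the full statement $\Con(S)$, whereas the hypothesis supplies only the partial statements $\neg\Box_{S,x}\bot$. To bridge the gap I would use overspill: the formula $\neg\Box_{S,x}\bot$ is $\Pi_1$ in $x$ and holds in $M$ for all standard $x$, hence holds at some nonstandard $a$; the $a$-fragment of $S$ is then $M$-consistent and contains every standard axiom of $S$, so the Henkin model built from it satisfies all of $S$. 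Promoting this model-theoretic picture to a genuine, \emph{uniform} interpretation is the main obstacle: one runs the Henkin construction relative to a $T$-definable cut on which partial consistency provably holds, so that the length-of-proof bookkeeping stays inside the cut. Reflexivity together with the shortening of cuts is what makes such a cut available. This is the technically delicate core of the Orey--H\'ajek theorem, and it is the step for which I would lean on the cited literature for the full verification.

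Finally, for the formalized equivalence $\vdash T \rhd S \leftrightarrow \forall x\ \Box_T\neg\Box_{S,x}\bot$, I would reread both directions as arguments carried out inside the base theory. Totality of exponentiation is precisely what guarantees that the relevant syntactic operations (substitution, the translation $\cdot^t$, the elementary bound $f$, and the Henkin construction) are provably total and that the needed $\Sigma_1$-completeness is available, so each external step above has a provable counterpart. The $(\Rightarrow)$ half then becomes a provable $\Box_T(\Box_{S,x}\bot \to \Box_{T,f(x)}\bot)$ combined with formalized reflexivity, while the $(\Leftarrow)$ half becomes the formalized completeness theorem on a definable cut, yielding $\Box_T\neg\Box_{S,x}\bot$ uniformly in $x$.
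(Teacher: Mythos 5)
First, a point of comparison: the paper does not prove this lemma at all --- it explicitly defers to the literature (Joosten's thesis and Visser's overview) --- so your proposal is measured against the standard Orey--H\'ajek argument that the paper alludes to. Your overall decomposition is exactly that argument: $(\Rightarrow)$ by translating a refutation from the axioms of $S$ below $x$ into a $T$-refutation inside a finite fragment of $T$ and then applying reflexivity of $T$; $(\Leftarrow)$ by the arithmetized completeness theorem (Henkin construction) carried out relative to a $T$-definable cut. In that sense the shape of your proof is right, and your decision to lean on the cited literature for the delicate Henkin-on-a-cut details is no worse than what the paper itself does.

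There is, however, one genuine conceptual error, and it concerns precisely the point the paper takes care to flag. You claim that in the $(\Leftarrow)$ direction ``reflexivity together with the shortening of cuts is what makes such a cut available.'' This is backwards: as the paper remarks, the $\Leftarrow$ direction needs no reflexivity at all (and formalizes already in ${\sf S}^1_2$); it is the $\Rightarrow$ direction that consumes reflexivity, and the totality of $\sf exp$ is needed only to formalize \emph{that} direction. Reflexivity of $T$ yields $T\vdash\neg\Box_{T,n}\bot$, i.e., consistency statements about fragments of $T$ itself, which gives no information about fragments of $S$. The cut you need comes instead from the hypothesis of that very direction: the class $J=\{x \mid \forall y\leq x\ \neg\Box_{S,y}\bot\}$ is $T$-definable and provably downward closed, it contains every standard number because $T\vdash\neg\Box_{S,x}\bot$ for each $x$, and Solovay-style shortening turns it into a definable cut closed under the operations the Henkin construction requires. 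Had reflexivity really been the source of the cut, your construction would not go through. A smaller inaccuracy: in $(\Rightarrow)$ the bound on the translated refutation is not an elementary function of $x$ --- it depends on the sizes of the $T$-proofs of the finitely many translated axioms --- but for the unformalized statement all you need is \emph{some} standard bound $m_x$ for each standard $x$, so this is harmless; the bookkeeping only becomes delicate (and is where $\sf exp$ enters) when that direction is formalized.
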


In \cite{jotesis} an overview is given of all the implications,
corresponding requirements and necessary arguments regarding Orey-H\'ajek. In
the~above Lemma the $\Leftarrow$ does not need the requirement of
reflexivity and can actually be formalized in ${\sf S}^1_2$. For the
other direction reflexivity is needed, and for its formalization,
the~totality of $\sf exp$ as well.

Note that, using the~above characterization, the~prima facie 
$\Sigma_3$ notion of interpretability becomes $\Pi_2$.

\section{Modal logics and semantics}

Similarly as formalized provability can be captured by modal
provability logic, we can use modal logic to reason about formalized
interpretability. Modal logic proved to be an~extremely useful tool to
reason about such formalized phenomena since it can visualize their
behaviour using a~simple language and an~intuitive frame
semantics. Perhaps the~most significant point where modal logic shows
its skills are completeness proofs - arithmatical completeness proofs
are based on modal completeness proofs obtained by rather standard
method of model theory of modal logics. For more on material contained
in this section we refer to \cite{vis97,jotesis,jogo04}. 

We will work with modal propositional language containing two
modalities - a~unary $\Box$ modality for provability and a~binary
$\rhd$ modality for interpretability. Modal interpretability formulas
are defined as follows: 
\[
 \mathcal{A}::=
 p\, |\, \bot \,|\, (\mathcal{A}\wedge\mathcal{A}) \,|\, (\mathcal{A}\to\mathcal{A}) \,|\, (\Box\mathcal{A}) \,|\, (\mathcal{A}\rhd\mathcal{A}) 
\]

We will use standard abbreviations $\Diamond, \vee, \neg, \top,
\leftrightarrow$, and we write $A\equiv B$ instead of $(A\rhd B)
\wedge (B\rhd A)$. We shall often omit brackets writing formulas. 
We say that $\neg, \Box,$ and $ \Diamond$ bind equally strong, they bind stronger then equally strong binding $\vee$ and $\wedge$ which in turn bind stronger then $\rhd$. The weakest binding connectives are $\to$ and $\leftrightarrow$.

\medskip

An~arithmetical interpretation of modal formulas is given by
\emph{arithmetical realizations}:  
for an~arithmetical theory $\mathrm{T}$, an~arithmetical
$\mathrm{T}$-realization is a~map $\ast$ sending propositional
variables $p$ to arithmetical sentences $p^*$. It is extended to
interpretability modal formulas as follows: first $\ast$ commutes with
all boolean connectives. Moreover $(\Box A)^* = \Box_T A^*$ and
$(A\rhd B)^* = A^*\rhd_T B^*$, i.e.\ $\ast\,$ translates modal operators
to formalized provability and interpretability over $\mathrm{T}$
respectively. 

An \emph{\inty principle} of an~arithmetical theory $\mathrm{T}$ is
a~modal formula $A$ such that $\forall\!\ast T\vdash A^\ast$. The
\emph{interpretability logic} of a~theory $\mathrm{T}$, denoted
\intl{T}, is then the~set of all the~interpretability principles of
$\mathrm{T}$. 

\subsection{The logic \il}

The~logic \il is in a~sense the~core interpretability logic - it is
a~(proper) part of the~interpretability logic of any reasonable
arithmetical theory: $\il\subset\intl{T}$. It captures the basic
structural behaviour of interpretability. 

\il is defined as the~smallest set of formulas containing all
propositional tautologies, all instantiations of the~following
schemata, and is closed under the Necessitation and Modus Ponens rules: 
\[
\begin{array}{ll}
\principle{L1} & \Box(A\to B)\to (\Box A\to\Box B) \\
\principle{L2} & \Box A\to\Box\Box A \\
\principle{L3} & \Box(\Box A\to A)\to \Box A \\
\principle{J1} & \Box(A\to B)\to A\rhd B \\
\principle{J2} & (A\rhd B)\wedge (B\rhd C)\to A\rhd C \\
\principle{J3} & (A\rhd C)\wedge (B\rhd C)\to A\vee B\rhd C \\
\principle{J4} & A\rhd B\to (\Diamond A\to\Diamond B) \\
\principle{J5} & \Diamond A\rhd A 
\end{array}
\]

Note that the~part of \il not containing the~$\rhd$ modality is
the~well-known G\"{o}del-L\"{o}b provability logic \gl, axiomatized
by the~first three schemata. It is easy to show that $\Box$ can be
defined in terms of $\rhd$ modality: $\vdash_{\il}\Box
A\leftrightarrow \neg A\rhd\bot$.  

\medskip

More interpretability logics are obtained extending \il by new
interpretability principles. Some of such principles are listed below:  

\[
\begin{array}{ll}
\principle{W}   & A\rhd B\to A\rhd B\wedge\Box\neg A \\
\principle{W^*} & A\rhd B\to B\wedge\Box C\rhd B\wedge\Box C\wedge\Box\neg A \\
\principle{M_0} & A\rhd B\to\Diamond A\wedge\Box C\rhd B\wedge\Box C \\
\principle{M}   & A\rhd B\to A\wedge\Box C\rhd B\wedge\Box C \\
\principle{P}   & A\rhd B\to\Box(A\rhd B) \\
\principle{R}   & A\rhd B\to \neg(A\rhd\neg C)\rhd B\wedge\Box C \\
\principle{R^*} & A\rhd B\to \neg(A\rhd\neg C)\rhd B\wedge\Box C\wedge\Box\neg A
\end{array}
\]

All of these principles are in \ilal except the principles
\principle{M} and \principle{P} which were mentioned above
already. For an overview, see~\cite{vis97} and~\cite{jogo04}. For the
last word on \ilal see~\cite{jogo08}.

For \principle{X} a set of principles we denote \extil{X} the~logic
extending \il with schemata from \principle{X}.  

There are some results considering arithmetical completeness of
interpretability logics: 
it was shown in \cite{bera:inte90},\cite{shav:logi88} that
the~interpretability logic of an~essentially reflexive theory (as
e.g. \pa) is \ilm.  For finitely axiomatizable theories containing
${\sf supexp}$ the~interpretability logic is known to be \ilp
(\cite{viss:inte90}).

An important consequence of \ilm that expresses
the $\Pi_1$-conservativity of interpretability more directly  is $(A\rhd
\Diamond B)\to\Box(A\to\Diamond B)$.

\subsection{Modal semantics}

Modal frame semantics of interpretability logics is based on \gl-frames extended with a~ternary accesibility relation interpreting the~binary $\rhd$ modality. The~ternary relation is however given by a~set of binary relations indexed by the~nodes:

\begin{definition} An \il-frame (a~Veltman frame) is a~triple $\langle W,R,S\rangle$ where $W$ is a~nonempty universe, $R$ is a~binary relation on $W$, and $S$ is a~set of binary relations on $W$, indexed by elements of $W$ such that 
\[
\begin{array}{ll}
1. & R\; \mbox{is transitive and conversely well-founded} \\
2. & yS_xz \Rightarrow xRy\, \&\, xRz \\
3. & xRy \Rightarrow yS_xy \\
4. & xRyRz \Rightarrow yS_xz \\
5. & uS_xvS_xw \Rightarrow uS_xw
\end{array}
\]

An \il-model is a~quadruple $\langle W,R,S,\Vdash\rangle$ where $\langle W,R,S\rangle$ is a~\il-frame and $\Vdash$ is a~subset of $W\times{\sf Prop}$, extending to boolean formulas as usualy and to modal formulas as follows:

\[
\begin{array}{lllll}
w &\Vdash&\Box A &\, \mbox{iff} \, & \forall v(wRv\Rightarrow v\Vdash A) \\
w &\Vdash&A\rhd B &\, \mbox{iff} \, & \forall u(wRu\; \&\; u\Vdash A\Rightarrow \exists v(uS_wv\Vdash B))
\end{array}
\]

\end{definition}

We adopt standard definitions of validity of a~modal formula in
a~model and in a~frame.  
Moreover, let \principle{X} be a~scheme of \inty logic. We say that
a~formula $\mathcal{C}$ in first or higher order logic is
a~\emph{frame condition} for \principle{X} if, for each frame $F$,
$$F\models\mathcal{C}\,\mbox{iff}\,F\models\principle{X}.$$  
\par\noindent\medskip
Let us list some known frame conditions (to be read universally quantified):
\[
 \begin{array}{ll}
\principle{M}   & xRyS_xzRu\Rightarrow yRu \\
\principle{M_0} & xRyRzS_xuRv\Rightarrow yRv \\
\principle{P}   & xRyRzS_xu\Rightarrow yRu\wedge zS_yu \\
\principle{W}   & (S_w;R)\, \mbox{is  conversely well-founded} \\
\principle{R}   & xRyRzS_xuRv\Rightarrow zS_yv 
 \end{array}
\]
We have the~following completeness results: 
\il is sound and complete w.r.t.\ (finite) \il frames, 
\ilp is complete w.r.t.\ (finite) \ilp frames (all in \cite{JoVe90}),
\ilw is complete w.r.t.\ (finite) \ilw frames (\cite{jonvelt99}, see
also \cite{jogo04}), \ilm is
complete w.r.t.\ (finite) \ilm frames (in \cite{JoVe90}, also in
\cite{bera:inte90}),

\section{Beklemishev's principle}

It is possible to write down a valid principle specific for the~\inty
logic of \pra. This was first done by Beklemishev (see \cite{vis97}).
Beklemishev's principle \principle{B} exploits the fact that
any finite $\Sigma_2$-extension of \pra is reflexive, together with the
fact that we have a good Orey-H\'ajek characterization for reflexive theories.

It turns out to be possible to define a class of modal formulae which are under
any arithmetical realization provably $\Sigma_2$ in \pra. These are called \emph{essentially $\Sigma_2$-formulas}, we write 
${\sf ES_2}$. Let us start by defining this class and some related classes.

The idea behind this definition is as follows. It is clear that each modal formula  that starts with a $\Box$ will become under any arithmetical realization a $\Sigma_1$ formula.
Likewise, taking Lemma 2 into account, we see that any formula of the form $A\rhd B$ where $A$ is $\Sigma_2$, will be under any arithmetical realization of complexity $\Pi_2$ and hence, 
$\neg(A\rhd B)$ will again be $\Sigma_2$. Note that we are here only formulating \emph{sufficient} conditions. It turns out to be rather tough to show these classes actually cover, up to provable equivalence, all formulae in the intended complexity class.

The class ${\sf BS}_1$ denotes the formulae that are boolean combinations of $\Sigma_1$ formulae ad thus certainly $\Delta_2$. Likewise, ${\sf ES}_3$ and ${\sf ES}_4$, stands for those modal formulae that are under any arithmetical realization always $\Sigma_3$ or $\Sigma_4$ respectively.

In our definition, $\mathcal{A}$ will stand for the set of all modal 
\inty formulae.
\[
\begin{array}{lll}
{\sf BS_1} 	&::= 	&\Box \mathcal{A} \mid \neg {\sf BS_1} \mid {\sf BS_1}\wedge
{\sf BS_1}\mid
{\sf BS_1}\vee {\sf BS_1} \ \\
{\sf ES_2}  	&::= 	&\Box \mathcal{A} \mid \neg \Box \mathcal{A} \mid {\sf ES_2}\wedge 
{\sf ES_2}\mid 
{\sf ES_2} \vee {\sf ES_2} \mid \neg ({\sf ES_2} \rhd \mathcal{A})\\
{\sf ES_3}  	&::= 	& \Box \mathcal{A} \mid \neg \Box \mathcal{A}  \mid 
{\sf ES_3}\wedge {\sf ES_3}\mid {\sf ES_3} \vee {\sf ES_3}\mid \mathcal{A}
\rhd \mathcal{A}\\
{\sf ES_4}	&::= 	& \Box \mathcal{A} 
\mid \neg {\sf ES_4} \mid {\sf ES_4} \wedge {\sf ES_4} \mid {\sf ES_4} \vee 
{\sf ES_4} \mid \mathcal{A}\rhd \mathcal{A}  \\
\end{array}
\]
For $n\geq 4$ we set ${\sf ES_n} := {\sf ES_4}$. We can now formulate Beklemishev's principle \principle{B}.
\[
\principle{B} \eqbydef A\rhd B \rightarrow A \wedge \Box C \rhd B\wedge \Box C \ \ \ \
\mbox{ for $A \in {\sf ES_2}$}
\]

Note that \principle{B} is just Montagna's principle \principle{M}
restricted to ${\sf ES_2}$-formulas. 

\begin{lemma}
$\bf{IL}\sf{B}\vdash\sf{B'}$, where
${\sf B'}: A\rhd B \rightarrow A\wedge C\rhd B\wedge C$ with
$A\in {\sf ES_{2}}$ and $C$ a~CNF $($a~conjunction of disjunctions$)$ of boxed formulas.
\end{lemma}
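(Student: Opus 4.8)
The plan is to reduce an arbitrary CNF of boxes to the two elementary shapes that \principle{B} can handle directly --- a single conjunct and a single disjunct --- while keeping the left-hand formula inside ${\sf ES_2}$ at every stage. The engine for the bookkeeping is that ${\sf ES_2}$ contains every boxed formula and is closed under $\wedge$ and $\vee$, so any disjunction of boxes, conjoined with an ${\sf ES_2}$-formula, is again in ${\sf ES_2}$. I will also use the standard monotonicity of $\rhd$ available already in \il: if $\vdash A'\to A$ and $\vdash B\to B'$ then $\vdash A\rhd B\to A'\rhd B'$, which follows from \principle{J1} (with necessitation) together with the transitivity principle \principle{J2}; in particular $\rhd$ respects provable equivalence in both arguments, which I use silently.

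The heart of the argument is the case of a single disjunction of boxes $D=\bigvee_{j=1}^{m}\Box E_j$. Assume $A\rhd B$ with $A\in{\sf ES_2}$. For each $j$, applying \principle{B} to $A\rhd B$ with the single box $\Box E_j$ yields $A\wedge\Box E_j\rhd B\wedge\Box E_j$, and weakening the right-hand side along the tautology $\Box E_j\to D$ gives $A\wedge\Box E_j\rhd B\wedge D$. Combining these $m$ statements by repeated use of \principle{J3} produces $\bigvee_{j}(A\wedge\Box E_j)\rhd B\wedge D$; since $\bigvee_{j}(A\wedge\Box E_j)$ is propositionally equivalent to $A\wedge D$, monotonicity delivers $A\wedge D\rhd B\wedge D$.

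It remains to treat a general CNF $C=\bigwedge_{i=1}^{n}D_i$ with each $D_i$ a disjunction of boxes, which I would do by induction on $n$. The case $n=0$ is vacuous. For the step, assume $A\rhd B$ with $A\in{\sf ES_2}$; the induction hypothesis gives $A\wedge\bigwedge_{i<n}D_i\rhd B\wedge\bigwedge_{i<n}D_i$. Put $A'=A\wedge\bigwedge_{i<n}D_i$ and $B'=B\wedge\bigwedge_{i<n}D_i$. Since each $D_i$ lies in ${\sf ES_2}$ (a disjunction of boxes) and $A\in{\sf ES_2}$, the closure properties noted above give $A'\in{\sf ES_2}$, so the single-disjunction step applies to $A'\rhd B'$ with $D_n$, yielding $A'\wedge D_n\rhd B'\wedge D_n$; reassociating, this is exactly the required $A\wedge C\rhd B\wedge C$.

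The main obstacle is precisely the disjunction step, because \principle{B} only knows how to push a single box through $\rhd$: the device is to split the antecedent over the disjunction, apply \principle{B} to each disjunct separately, and glue the results back together with \principle{J3}. The only other point demanding attention is verifying that the antecedent never leaves ${\sf ES_2}$ as the conjuncts of $C$ are absorbed one by one, which is exactly what the closure of ${\sf ES_2}$ under $\wedge$ and $\vee$ secures.
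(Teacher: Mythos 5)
Your proof is correct, and there is nothing in the paper to diverge from: the paper's own proof of this lemma is the single word ``Easy.'' Your argument --- collapsing each disjunction of boxes via \principle{J3} after applying \principle{B} disjunct-by-disjunct, then absorbing the conjuncts of $C$ one at a time using the closure of ${\sf ES_2}$ under $\wedge$ and $\vee$ so that \principle{B} remains applicable at every stage --- is exactly the routine verification the authors leave to the reader.
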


\begin{proof}
Easy.
\end{proof}

\section{Arithmetical soundness of \principle{B}}

By Lemma \ref{lemm:prarefl} we know that $\pra + \sigma$ is reflexive for any
$\Sigma_2(\pra)$-sentence $\sigma$. Thus, we get by Orey-H\'ajek
that
\begin{equation}\label{equa:prab}
\pra \vdash \sigma \rhd_{\pra}\psi \leftrightarrow
\forall x\ \Box_{\pra} (\sigma \rightarrow \Diamond_{\pra,x}\psi).
\end{equation}
Consequently, for $\sigma \in \Sigma_2(\pra)$, 
$\neg (\sigma \rhd_{\pra}\psi) \in \Sigma_2(\pra)$
and we see that, indeed, 
$\forall \, A{\in}{\sf ES_2}\, \forall *\ A^*\in \Sigma_2(\pra)$. 
This enables us to prove the arithmetical soundness of \principle{B}.

\begin{theorem}\label{theo:bsound}
For any formulas $B$ and $C$ we have that 
$\forall \, A {\in} {\sf ES_2}\, \forall * 
\pra \vdash (A\rhd B \rightarrow A \wedge \Box C \rhd B \wedge \Box C)^*$.
\end{theorem}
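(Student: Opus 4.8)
The plan is to reduce the modal soundness statement to the arithmetical fact recorded in equation~\eqref{equa:prab}, using the Orey--H\'ajek characterisation for reflexive theories together with Lemma~\ref{lemm:prarefl}. Fix an arbitrary realization $*$ and an arbitrary $A\in{\sf ES_2}$; set $\alpha\eqbydef A^*$, $\beta\eqbydef B^*$, $\gamma\eqbydef C^*$. By the observation preceding the theorem, $\alpha\in\Sigma_2(\pra)$, so both $\pra+\alpha$ and $\pra+(\alpha\wedge\Box_\pra\gamma)$ are extensions of \pra by $\Sigma_2$-sentences (note $\Box_\pra\gamma$ is $\Sigma_1$, hence the conjunction is still $\Sigma_2$), and therefore reflexive by Lemma~\ref{lemm:prarefl}. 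Consequently equation~\eqref{equa:prab} applies to each of these, rewriting the three interpretability statements in the goal as $\Pi_2$ consistency-statements.

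First I would unfold, reasoning inside \pra, the antecedent $\alpha\rhd_\pra\beta$ via \eqref{equa:prab} into $\forall x\,\Box_\pra(\alpha\to\Diamond_{\pra,x}\beta)$. Similarly I would unfold the desired conclusion $\alpha\wedge\Box_\pra\gamma\rhd_\pra\beta\wedge\Box_\pra\gamma$ into $\forall x\,\Box_\pra(\alpha\wedge\Box_\pra\gamma\to\Diamond_{\pra,x}(\beta\wedge\Box_\pra\gamma))$. The core of the argument is then the purely arithmetical implication, provable in \pra, that from $\Box_\pra(\alpha\to\Diamond_{\pra,x}\beta)$ one can derive $\Box_\pra(\alpha\wedge\Box_\pra\gamma\to\Diamond_{\pra,x}(\beta\wedge\Box_\pra\gamma))$ for every $x$. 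Working under the box, I would assume $\alpha\wedge\Box_\pra\gamma$; from $\alpha$ and the hypothesis I obtain $\Diamond_{\pra,x}\beta$, i.e. a model (in the sense of consistency) of $\pra$ with only $x$-bounded proofs available in which $\beta$ holds. The task is to push the extra conjunct $\Box_\pra\gamma$ through the $\Diamond_{\pra,x}$.

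The main obstacle, and the step deserving care, is precisely this transfer of $\Box_\pra\gamma$ under $\Diamond_{\pra,x}$: one needs $\Box_\pra\gamma\to\Diamond_{\pra,x}\Box_\pra\gamma$, or more accurately that the $\Sigma_1$-sentence $\Box_\pra\gamma$ persists into the $\Diamond_{\pra,x}$-context. This is exactly an instance of provable $\Sigma_1$-completeness: \pra proves $\Box_\pra\gamma\to\Box_\pra(\,\Diamond_{\pra,x}\top\to\Diamond_{\pra,x}\Box_\pra\gamma\,)$, so that the truth of the $\Sigma_1$ fact $\Box_\pra\gamma$ is inherited by any $\pra$-cut witnessing $\Diamond_{\pra,x}\beta$. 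Combining $\Diamond_{\pra,x}\beta$ with this gives $\Diamond_{\pra,x}(\beta\wedge\Box_\pra\gamma)$, as required. Here the totality of exponentiation, which \pra provides and which \eqref{equa:prab} already relies on, is what licenses the formalization of these consistency manipulations.

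Finally I would reassemble the pieces: the \pra-provable implication between the two universally quantified $\Pi_2$-statements, read back through \eqref{equa:prab}, yields $\pra\vdash(\alpha\rhd_\pra\beta\to\alpha\wedge\Box_\pra\gamma\rhd_\pra\beta\wedge\Box_\pra\gamma)$, which is the translation of the target formula under $*$. Since $*$ and $A\in{\sf ES_2}$ were arbitrary, this establishes the theorem; the appearance of arbitrary $B,C$ causes no trouble because nothing in the argument used any structural property of $\beta$ or $\gamma$ beyond $\Box_\pra\gamma$ being $\Sigma_1$.
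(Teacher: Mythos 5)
Your overall skeleton coincides with the paper's own proof: unfold the antecedent $\alpha\rhd_{\pra}\beta$ through the formalized Orey--H\'ajek equivalence \eqref{equa:prab} (legitimate, since $\alpha\in\Sigma_2(\pra)$ makes $\pra+\alpha$ reflexive), establish under the box the implication $\alpha\wedge\Box_{\pra}\gamma\to\Diamond_{\pra,x}(\beta\wedge\Box_{\pra}\gamma)$, and read the result back through \eqref{equa:prab}. The gap sits in the one step that carries all the content: the transfer of $\Box_{\pra}\gamma$ into the $\Diamond_{\pra,x}$-context. The formula you cite as doing this work, $\Box_{\pra}\gamma\to\Box_{\pra}(\Diamond_{\pra,x}\top\to\Diamond_{\pra,x}\Box_{\pra}\gamma)$, does not suffice, for two reasons. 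First, it sits at the wrong modal depth: the reasoning takes place under a single $\Box_{\pra}$, so what you need is a \pra-provable \emph{unboxed} implication $\Box_{\pra}\gamma\wedge\Diamond_{\pra,x}\beta\to\Diamond_{\pra,x}(\beta\wedge\Box_{\pra}\gamma)$, which can then be imported under the box by necessitation and distribution; your formula, applied under the box where $\Box_{\pra}\gamma$ is an assumption, only yields a doubly boxed statement that cannot be stripped. Second, and more seriously, its conclusion $\Diamond_{\pra,x}\Box_{\pra}\gamma$ asserts consistency of $\Box_{\pra}\gamma$ \emph{alone} with the restricted theory: from the two separate facts $\Diamond_{\pra,x}\beta$ and $\Diamond_{\pra,x}\Box_{\pra}\gamma$ one cannot infer the joint consistency $\Diamond_{\pra,x}(\beta\wedge\Box_{\pra}\gamma)$, since two sentences each consistent with a theory need not be jointly consistent with it. So ``combining $\Diamond_{\pra,x}\beta$ with this'' is a non sequitur as written, even though the intuition you voice (true $\Sigma_1$ facts persist into whatever witnesses the restricted consistency) is the right one.

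What is actually needed --- and what the paper uses --- is restricted \emph{provability}, not consistency, of the $\Sigma_1$-sentence $\Box_{\pra}\gamma$: for $n$ large enough depending on $\gamma$, \pra proves $\Box_{\pra}\gamma\to\Box_{\pra,n}\Box_{\pra}\gamma$ (equation \eqref{equa:consoh}), and then the restricted distributivity $\Diamond_{\pra,n}\epsilon\wedge\Box_{\pra,n}\delta\to\Diamond_{\pra,n}(\delta\wedge\epsilon)$ (equation \eqref{equa:restrictDistributivity}) converts $\Diamond_{\pra,n}\beta\wedge\Box_{\pra,n}\Box_{\pra}\gamma$ into $\Diamond_{\pra,n}(\beta\wedge\Box_{\pra}\gamma)$. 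Note that both of these facts hold only for $n$ above a threshold determined by $\gamma$; your claim that the transfer goes through ``for every $x$'' overlooks this, because provable $\Sigma_1$-completeness into a \emph{restricted} provability predicate is precisely where the ``large enough'' condition enters. That lapse is repairable --- $\Diamond_{\pra,n}$ weakens as $n$ decreases, so having the statements for all sufficiently large $x$ gives them for all $x$ --- but it must be said. With the transfer step rebuilt along these lines, your argument becomes essentially the paper's proof.
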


\begin{proof}
For some $A\in {\sf ES_2}$ and arbitrary $B$ and $C$, we consider
some realization $*$ and let $\alpha \eqbydef A^*$,
$\beta\eqbydef B^*$ and $\gamma \eqbydef C^*$. We reason in \pra and 
assume $\alpha\rhd_{\pra}\beta$. As $\alpha$ is $\Sigma_2(\pra)$, we get
by \eqref{equa:prab} that
\begin{equation}\label{equa:assoh}
\forall x\ \Box_{\pra}(\alpha \rightarrow \Diamond_{\pra,x}\beta).
\end{equation}
We now consider $n$ large enough (dependent on $\gamma$)
such that
\begin{equation}\label{equa:consoh}
\Box_{\pra}(\Box_{\pra}\gamma \rightarrow \Box_{\pra ,n}\Box_{\pra}\gamma).
\end{equation}
From general observations we have that, for large enough $n$,
\[
\Box_{\pra,n} (\delta \to \neg \epsilon) \wedge \Box_{\pra,n} \delta \to
\Box_{\pra,n} \neg \epsilon,
\]
whence
\begin{equation}\label{equa:restrictDistributivity}
\Diamond_{\pra,n}\epsilon \wedge \Box_{\pra,n} \delta \to \Diamond_{\pra,n}(\delta \wedge \epsilon) 
\end{equation}

Combining \eqref{equa:assoh}, \eqref{equa:consoh}, and using 
\eqref{equa:restrictDistributivity}, we see that for 
any $n$, 
$\Box (\alpha \wedge \Box \gamma \rightarrow \Diamond_{\pra,n}(\beta \wedge 
\Box \gamma))$. Clearly, $\alpha \wedge \Box \gamma$ is still a 
$\Sigma_2(\pra)$-sentence.\footnote{Actually, this observation is not necessary as we use the direction in the Orey-H\'ajek Characterization that does not rely on the reflexivity.} 
Again by \eqref{equa:prab} we get 
$\alpha \wedge \Box \gamma \rhd \beta \wedge \Box \gamma$.
\end{proof}

Let ${\sf M^{ES_n}}$ be the schema $A\rhd B \rightarrow 
A\wedge \Box C\rhd B\wedge \Box C$ with $A \in {\sf ES_n}$. Theorem \ref{theo:bsound} can be generalized using results of \cite{Bek97a} to the~theory \ir{n}, which is Robinson's arithmetic $\rm{Q}$ plus the~$\Sigma_n$ induction \emph{rule}, for $n=1,2,3$ as follows:

\begin{theorem}
$\intl{\ir{n}}\vdash \principle{M^{ES_{n+1}}}$ for $n=1,2,3$.
\end{theorem}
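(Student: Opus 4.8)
The plan is to generalize the soundness proof for Beklemishev's principle (Theorem \ref{theo:bsound}) from \pra to the theories \ir{n}, by isolating exactly which two arithmetical ingredients that proof used and supplying their analogues for \ir{n}. The proof of Theorem \ref{theo:bsound} rested on precisely two facts: first, that finite $\Sigma_2$-extensions of \pra are reflexive (Lemma \ref{lemm:prarefl}), which via Orey-H\'ajek yields the crucial equivalence \eqref{equa:prab}; and second, the purely logical manipulations \eqref{equa:consoh} and \eqref{equa:restrictDistributivity} that hold over any reasonable base theory (in fact over \ea, since they only use provable superexponentiation). So the real content to transfer is the reflexivity statement at the appropriate level of the hierarchy.

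First I would invoke the results of \cite{Bek97a} to obtain the analogue of Lemma \ref{lemm:prarefl}: namely that any r.e.\ extension of \ir{n} by $\Sigma_{n+1}$-sentences is reflexive. This is the natural generalization, since \pra sits at the ``$\omega$-iterated $\Pi_2$-reflection'' level and behaves like the $n=1$ case where $\Sigma_2$-extensions are reflexive; moving to the $\Sigma_n$ induction rule \ir{n} shifts the guaranteed reflexivity of extensions up by one, to $\Sigma_{n+1}$. With this reflexivity in hand, the formalized Orey-H\'ajek characterization (Lemma 2, whose $\Leftarrow$ direction needs no reflexivity and whose $\Rightarrow$ direction needs reflexivity plus totality of $\sf exp$) gives the \ir{n}-analogue of \eqref{equa:prab}: for $\sigma \in \Sigma_{n+1}(\ir{n})$,
\[
\ir{n}\vdash \sigma\rhd_{\ir{n}}\psi \leftrightarrow \forall x\ \Box_{\ir{n}}(\sigma\to\Diamond_{\ir{n},x}\psi).
\]
Consequently $\neg(\sigma\rhd_{\ir{n}}\psi)$ is again $\Sigma_{n+1}$, so that every realization of an ${\sf ES_{n+1}}$-formula is $\Sigma_{n+1}(\ir{n})$ — exactly the complexity-class bookkeeping that licenses the restriction $A\in{\sf ES_{n+1}}$ in $\principle{M^{ES_{n+1}}}$.

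With these two ingredients the argument of Theorem \ref{theo:bsound} goes through essentially verbatim: fix $A\in{\sf ES_{n+1}}$, set $\alpha=A^*$, $\beta=B^*$, $\gamma=C^*$, reason in \ir{n}, assume $\alpha\rhd_{\ir{n}}\beta$, pass to $\forall x\,\Box_{\ir{n}}(\alpha\to\Diamond_{\ir{n},x}\beta)$ via the Orey-H\'ajek equivalence, combine with the logical facts \eqref{equa:consoh} and \eqref{equa:restrictDistributivity} (which transfer since \ir{n} extends \ea and proves superexponentiation), conclude $\Box(\alpha\wedge\Box\gamma\to\Diamond_{\ir{n},n}(\beta\wedge\Box\gamma))$ for every $n$, and finally use the Orey-H\'ajek equivalence in the other direction — noting $\alpha\wedge\Box\gamma$ remains $\Sigma_{n+1}(\ir{n})$, or invoking the footnote's remark that this direction does not even need reflexivity — to obtain $\alpha\wedge\Box\gamma\rhd\beta\wedge\Box\gamma$.

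I expect the main obstacle to be verifying that \cite{Bek97a} really delivers the reflexivity statement in the precise formalized, provable form the Orey-H\'ajek lemma requires, and that it does so uniformly for $n=1,2,3$ with the clean index shift to $\Sigma_{n+1}$. The restriction to $n\le 3$ is itself a symptom of this delicacy: the induction-rule theories \ir{n} are subtle, their reflexion-closure properties depend on fine results about the $\Sigma_n$ induction rule, and the correspondence between the rule and iterated reflection (which makes \pra the well-behaved limit case) is exactly what \cite{Bek97a} controls only in this range. Once that reflexivity input is correctly cited and formalized, the rest is a routine re-run of the \pra proof with every subscript \pra replaced by \ir{n} and every occurrence of $\Sigma_2$ replaced by $\Sigma_{n+1}$.
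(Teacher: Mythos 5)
Your proposal is correct and is essentially the argument the paper intends: the paper in fact gives no proof of this theorem at all, only the remark that Theorem \ref{theo:bsound} ``can be generalized using results of \cite{Bek97a}'', and your reconstruction --- extract from \cite{Bek97a} that r.e.\ $\Sigma_{n+1}$-extensions of \ir{n} are reflexive, feed this into the formalized Orey--H\'ajek characterization, and rerun the soundness proof with every $\pra$ replaced by \ir{n} and every $\Sigma_2$ by $\Sigma_{n+1}$ --- is exactly that generalization.

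Two corrections to your commentary, neither of which breaks the proof skeleton. First, your complexity bookkeeping only covers the case $n=1$: the clause that generates ${\sf ES_3}$ and ${\sf ES_4}$ is $\mathcal{A}\rhd\mathcal{A}$ with an \emph{arbitrary} antecedent, so its $\Sigma_3$ bound cannot come from the Orey--H\'ajek equivalence (which needs a $\Sigma_{n+1}$ antecedent to apply); it comes from the prima facie $\Sigma_3$ complexity of formalized interpretability noted at the end of Section 2, with ${\sf ES_4}$ then closed under negation because boolean combinations of $\Sigma_1$- and $\Sigma_3$-sentences are $\Delta_4$. Only the ${\sf ES_2}$ clause $\neg({\sf ES_2}\rhd\mathcal{A})$ uses Orey--H\'ajek for its bound. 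Second, your diagnosis of the restriction to $n\le 3$ is not right: the rule-versus-iterated-reflection correspondence of \cite{Bek97a} is not confined to this range. The restriction reflects the modal side, not the arithmetical side --- the paper sets ${\sf ES_n}:={\sf ES_4}$ for $n\geq 4$, precisely because realizations of top-level-variable-free modal formulas never exceed $\Delta_4$, so for $n\geq 4$ the schema $\principle{M^{ES_{n+1}}}$ is literally $\principle{M^{ES_4}}$ and the theorem would assert nothing modally new (indeed it would still be true, by the same proof, since $\Sigma_4\subseteq\Sigma_{n+1}$).
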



\section{A frame condition for \principle{B}}


Let us first fix some notation. 
If  $\mathcal{C}$ is a finite set, we write $xR\mathcal{C}$ as short for 
$\con_{c\in \mathcal{C}}xRc$. Similar conventions hold for the other relations.
The $A$-critical cone of $x$, $\cone{x}{A}$ is in this section defined as
$\cone{x}{A}:= \{ y \mid xRy \wedge \forall z\; (yS_xz \rightarrow z \not \Vdash A) \}$.

By $x{\uparrow}$ we denote the set of worlds that lie above $x$ w.r.t. the $R$ relation.
That is, $x{\uparrow}:= \{ y \mid xRy\}$.
With $yS_x\!{\uparrow}$ we denote the set of those $z$ for which
$yS_xz$.

We will consider frames both as modal models without a valuation and as structures for
first- 
(or sometimes 
second) order logic. We say that a model $M$ is based on
a frame $F$ if $F$ is precisely $M$ with the $\Vdash$ relation left out.
%
%




In this subsection we give 
the frame condition of Beklemishev's 
principle. 
Our frame condition holds on the class of finite frames. At first sight, the
condition might seem a bit awkward. On second sight it is just the
frame condition of \principle{M} with some simulation built in.
%
%
First we approximate the class ${\sf ES_2}$ by stages.
\begin{definition}  \ \\
$
\begin{array}{lll} 
{\sf ES_2^0} & := & {\sf BS_1} \\
{\sf ES_2^{n+1}} &:= & {\sf ES_2^n} \mid {\sf ES_2^{n+1}} \wedge {\sf ES_2^{n+1}} 
\mid {\sf ES_2^{n+1}} \vee {\sf ES_2^{n+1}} 
\mid \neg ({\sf ES_2^n} \rhd \mathcal{A})
\end{array}
$
\end{definition}
It is clear that ${\sf ES_2} = \cup_i {{\sf ES_2^i}}$. 
We now define some first order formulas $\bis{i} (b,u)$ that say that two nodes $b$ and $u$ in a frame
look alike. The larger $i$ is, the more the two points look alike. We
use the letter $\bis{}$ as to hint at a simulation.

\begin{definition}  \ \\  
$
\begin{array}{lll}
\bis{0}(b,u) & := & b{\uparrow} = u{\uparrow} \\
\bis{n+1} (b,u) & := & \bis{n} (b,u) \wedge \\
\ & \ & \forall c \; 
(bRc \rightarrow \exists c' \; (uRc' \wedge \bis{n}(c,c') 
\wedge c'S_u{\uparrow} \subseteq
cS_b{\uparrow} ))
\end{array}
$
\end{definition}

By induction on $n$ we easily see that $\forall n \; F\models \bis{n} (b,b) $ for all 
frames $F$ and all $b{\in}F$.
For $i\geq 1$ the relation $\bis{i} (b,u)$ is in general not symmetric.
However it is not hard to see that the $\bis{i}$ are transitive and reflexive.

\begin{lemma}\label{lemm:bekbis}
Let $F$ be a model. For all $n$ we have the following. If $F\models \bis{n}(b,u)$, then
$b\Vdash A \Rightarrow u\Vdash A$ for all $A{\in}{\sf ES_2^n}$.
\end{lemma}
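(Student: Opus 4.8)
The plan is to argue by induction on $n$, and inside the inductive step by a subsidiary induction on the build-up of $A$ according to the grammar defining ${\sf ES_2^{n+1}}$. The outer induction hypothesis I carry along is exactly the full statement of the lemma for the previous index.

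For the base case $n=0$ I would note that $\bis{0}(b,u)$ says $b{\uparrow}=u{\uparrow}$, which is a \emph{symmetric} relation, and that ${\sf BS_1}$ is closed under negation. Hence it is cleanest to prove the \emph{biconditional} $b\Vdash A\Leftrightarrow u\Vdash A$ for every $A\in{\sf BS_1}$ by induction on $A$. The only atomic case is $A=\Box D$: since $b\Vdash\Box D$ depends only on the set $b{\uparrow}$ and the valuation there, and $b{\uparrow}=u{\uparrow}$, the two forcings agree. The boolean cases are immediate from the induction hypothesis, and it is precisely for the negation clause that I need the biconditional rather than a single implication.

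For the step from $n$ to $n+1$, I assume the statement for $n$ together with $\bis{n+1}(b,u)$, which in particular yields $\bis{n}(b,u)$, and run through the clauses of ${\sf ES_2^{n+1}}$. If $A\in{\sf ES_2^n}$, then since $\bis{n}(b,u)$ holds the outer induction hypothesis gives $b\Vdash A\Rightarrow u\Vdash A$ at once. Conjunction and disjunction are handled by the inner induction hypothesis, using that $\Vdash$ commutes with $\wedge$ and $\vee$.

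The heart of the matter, and the step I expect to be the main obstacle, is the clause $A=\neg(D\rhd E)$ with $D\in{\sf ES_2^n}$ and $E\in\mathcal{A}$ arbitrary. Suppose $b\Vdash\neg(D\rhd E)$. Unwinding the semantics there is a witness $a$ with $bRa$, $a\Vdash D$, and $a\in\cone{b}{E}$, i.e.\ no $S_b$-successor of $a$ forces $E$. I would feed $bRa$ into the defining clause of $\bis{n+1}(b,u)$ to obtain $a'$ with $uRa'$, $\bis{n}(a,a')$ and $a'S_u{\uparrow}\subseteq aS_b{\uparrow}$, and claim that $a'$ witnesses $u\Vdash\neg(D\rhd E)$. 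The three requirements on $a'$ match the three conjuncts the simulation hands me: $uRa'$ is immediate; since $D\in{\sf ES_2^n}$ and $\bis{n}(a,a')$, the outer induction hypothesis upgrades $a\Vdash D$ to $a'\Vdash D$; and for criticality, any $v$ with $a'S_uv$ lies in $a'S_u{\uparrow}\subseteq aS_b{\uparrow}$, so $aS_bv$ and hence $v\not\Vdash E$ because $a\in\cone{b}{E}$. Thus $u\not\Vdash D\rhd E$, as required. The delicate point is exactly this alignment: the inclusion of $S$-successor sets is what transports $E$-criticality from $a$ to $a'$, while the weaker similarity $\bis{n}$ is what transports the ${\sf ES_2^n}$-formula $D$, and the definition of $\bis{n+1}$ is engineered so that a single application supplies all three conjuncts a witness for $\neg(D\rhd E)$ must satisfy.
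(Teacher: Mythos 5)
Your proposal is correct and takes essentially the same route as the paper: an induction on $n$ whose heart is exactly the same witness-transfer argument for $\neg(D\rhd E)$, where $uRc'$, $\bis{n}(c,c')$ and $c'S_u{\uparrow}\subseteq cS_b{\uparrow}$ respectively supply $R$-accessibility, transfer of the ${\sf ES_2^n}$-formula $D$, and transfer of $E$-criticality. The only difference is presentational: the paper disposes of the boolean structure by putting $A$ into a disjunctive normal form (both at $n=0$ and at $n+1$), whereas you run an inner structural induction and strengthen the base case to a biconditional to handle negation.
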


\begin{proof}
We proceed by induction on $n$. If $n{=}0$, $A{\in}{\sf ES_2^0}$ can be written as
$\dis_i(\Box A_i \wedge \con_j \Diamond A_{ij})$.
Clearly, if $b{\uparrow}=u{\uparrow}$ then $b\Vdash A \Rightarrow u\Vdash A$.

Now consider $A{\in}{\sf ES_2^{n+1}}$ and $b$ and $u$ such that $F\models \bis{n+1} (b,u)$.
We can write 
\[
A = \dis_i (A_{i0} \wedge \con_{j \neq 0} \neg (A_{ij}\rhd B_{ij})),
\]
with $A_{ij}$ in ${\sf ES_2^n}$. If $b\Vdash A$, then for some $i$, 
$b\Vdash A_{i0} \wedge \con_{j \neq 0} \neg (A_{ij}\rhd B_{ij})$.
As $\bis{n+1} (b,u) \rightarrow \bis{n} (b,u)$, 
and by the induction hypothesis we
see that $u\Vdash A_{i0}$. So, we only need to see that 
$u\Vdash \neg (A_{ij} \rhd B_{ij})$ for $j{\neq}0$. As 
$b\Vdash \neg (A_{ij} \rhd B_{ij})$, for some $c\,{\in}\,\cone{b}{B_{ij}}$
we have $c\Vdash A_{ij}$. By $\bis{n+1} (b,u)$ we find a $c'$ such that
$uRc'$, and $c'S_u\!{\uparrow} \subseteq cS_b\!{\uparrow}$ (thus $cS_bc'$).
This guarantees that $c'{\in}\cone{u}{B_{ij}}$. Moreover we know that
$\bis{n}(c,c')$, thus by the induction hypothesis, as $c\Vdash A_{ij}$,
we get that $c'\Vdash A_{ij}$. Consequently 
$u\Vdash \neg (A_{ij} \rhd B_{ij})$.

\end{proof}

\begin{lemma}\label{lemm:beksent}
Let $F$ be a finite frame. For all $i$, and any $b{\in}F$, there is a valuation 
$V_i^b$ on $F$ and a formula $A_{i}^b{\in}{\sf ES_2^i}$ such that 
$F\models \bis{i} (b,u) \Leftrightarrow u\Vdash A_i^b$.
\end{lemma}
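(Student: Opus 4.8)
The plan is to argue by induction on $i$, and the key design decision is to use a single valuation that depends on neither $i$ nor $b$. Since $F$ is finite, I would introduce for each world $w{\in}F$ a distinct propositional variable $p_w$ and let $V^\ast$ make $p_w$ true exactly at $w$; this ``names'' every world. Under $V^\ast$ one has $u\Vdash\Diamond p_w \Leftrightarrow uRw$, so finite boolean combinations of the formulas $\Diamond p_w$ let us describe the successor set $u{\uparrow}$ inside the modal language. Using the same $V^\ast$ for all the $A_i^b$ removes the only real bookkeeping danger: in the inductive step several formulas supplied by the induction hypothesis get conjoined, and they must all be read off under one common valuation.

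For the base case recall that $\bis{0}(b,u)$ says exactly $b{\uparrow}=u{\uparrow}$. I would take
$$A_0^b := \con_{w:\,bRw}\Diamond p_w \ \wedge\ \con_{w:\,\neg bRw}\neg\Diamond p_w,$$
a finite boolean combination of boxed formulas, hence a member of ${\sf BS_1}={\sf ES_2^0}$. Under $V^\ast$ its first half expresses $b{\uparrow}\subseteq u{\uparrow}$ and its second half $u{\uparrow}\subseteq b{\uparrow}$, so it holds at $u$ precisely when $u{\uparrow}=b{\uparrow}$, i.e.\ precisely when $\bis{0}(b,u)$.

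For the inductive step I would invoke the induction hypothesis to obtain, for \emph{every} node $d$, a formula $A_n^d\in{\sf ES_2^n}$ with $u\Vdash A_n^d\Leftrightarrow\bis{n}(d,u)$ under $V^\ast$. Writing $D_c:=\con_{w\in cS_b{\uparrow}}\neg p_w$, which under $V^\ast$ holds exactly at the worlds \emph{outside} $cS_b{\uparrow}$, I would set
$$A_{n+1}^b := A_n^b \ \wedge\ \con_{c:\,bRc}\neg(A_n^c\rhd D_c).$$
This lies in ${\sf ES_2^{n+1}}$: the conjunct $A_n^b$ is in ${\sf ES_2^n}$, and each remaining conjunct has the shape $\neg({\sf ES_2^n}\rhd\mathcal{A})$. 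The verification then unwinds the critical-cone meaning of the negated $\rhd$: $u\Vdash\neg(A_n^c\rhd D_c)$ iff there is a $c'$ with $uRc'$, $c'\Vdash A_n^c$, and $c'\in\cone{u}{D_c}$. By the induction hypothesis the middle clause is $\bis{n}(c,c')$, and because $D_c$ fails exactly on $cS_b{\uparrow}$, the criticality clause $c'\in\cone{u}{D_c}$ says precisely $c'S_b\!{\uparrow}\subseteq cS_b{\uparrow}$ read with $S_u$, i.e.\ $c'S_u{\uparrow}\subseteq cS_b{\uparrow}$. Conjoining over the finitely many $c$ with $bRc$, together with the $A_n^b$ conjunct supplying the leading $\bis{n}(b,u)$, reproduces verbatim the defining clause of $\bis{n+1}(b,u)$.

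The heart of the argument — and the one step I would flag as needing care — is exactly this matching of the second conjunct of $\bis{n+1}$ with the semantics of $\neg(A_n^c\rhd D_c)$: everything hinges on reading the inclusion $c'S_u{\uparrow}\subseteq cS_b{\uparrow}$ as a criticality condition, which is the one thing the $\rhd$ modality can detect. Finiteness of $F$ is used pervasively, both to keep all the conjunctions finite and to guarantee that the naming valuation $V^\ast$ exists; beyond this I expect only the routine checks that the constructed formulas stay inside the grammars of ${\sf BS_1}$ and ${\sf ES_2^{n+1}}$.
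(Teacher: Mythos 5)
Your proof is correct, and its skeleton is the same as the paper's: induction on $i$, a base case that pins down $b{\uparrow}$ by a boolean combination of boxed formulas, and an inductive step in which the second clause of $\bis{i+1}$ is captured by conjuncts of the form $\neg(A_i^c\rhd\cdot\,)$, whose negated-$\rhd$ (critical cone) semantics encodes exactly the inclusion $c'S_u{\uparrow}\subseteq cS_b{\uparrow}$. Where you genuinely depart from the paper is in the handling of valuations. The paper lets the valuation depend on $i$ and $b$: in the inductive step it merges the valuations $V_i^b$ and $V_i^j$ supplied by the induction hypothesis (which needs the tacit assumption that their variables do not clash) and introduces fresh variables $q_j$ valued so as to hold exactly outside $x_jS_b{\uparrow}$. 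You instead fix a single naming valuation $V^\ast$ once and for all, prove the correspondingly strengthened induction statement, and replace each fresh $q_j$ by the definable formula $D_c=\con_{w\in cS_b{\uparrow}}\neg p_w$, which under $V^\ast$ has exactly the extension the paper gives to $q_j$; this is legitimate because in a finite frame every set of worlds is definable from the names, and $\neg(A_i^c\rhd D_c)$ still has the shape $\neg({\sf ES_2^n}\rhd\mathcal{A})$ required by the grammar, since the right-hand side of $\rhd$ may be arbitrary. What this buys you: a slightly stronger conclusion (one valuation, independent of $i$ and $b$, serves all formulas $A_i^b$), no variable-disjointness bookkeeping, and a verification that runs as a single chain of equivalences rather than the paper's two separate implications $(\romannumeral 1)$ and $(\romannumeral 2)$. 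The price --- your formulas carry a name $p_w$ for every world of $F$ rather than only for the successors of $b$ --- is harmless since $F$ is finite.
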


\begin{proof}
The proof proceeds by induction on $i$. First consider the basis case, that is, $i{=}0$.
Let $b{\uparrow}$ be given by the finite set
$\{ x_j\}_{j\in J}$. We define
\[
\begin{array}{lll}
y\Vdash p_j &\Leftrightarrow & y{=}x_j \\
y\Vdash r & \Leftrightarrow & bRy.
\end{array}
\]
Let $A_0^b$ be $\Box r \wedge \con_j \Diamond p_j$. It is now obvious that 
$u\Vdash A_0 \Leftrightarrow u{\uparrow}{=}b{\uparrow}$.

For the inductive step, we fix some $b$ and reason as follows.
First, let $V_i^b$ and $A_i^b$ be given by the induction hypothesis such
that $u\Vdash A_i^b \Leftrightarrow F\models \bis{i}(b,u)$. We do not specify 
the variables in $A_i$ but we suppose they do not coincide with any of the ones 
mentioned below. Let $b{\uparrow} = \{ x_j\}_{j\in J}$. The induction hypothesis 
gives us sentences $A_i^j$ (no sharing of variables) and valuations $V_i^j$
such that 
$F,u \Vdash A_i^j \Leftrightarrow F\models \bis{i} (x_j ,u)$.

Let $\{ q_j \}_{j\in J}$ be a set of fresh variables. $V_{i+1}^b$ will be 
$V_i^b$ and 
$V_i^j$ on the old variables. For the 
$\{ q_j \}_{j\in J}$ we define $V_{i+1}^b$ 
to act as follows:
\[
y\Vdash q_j \Leftrightarrow y \,{\not \in}\, x_j\!S_b{\uparrow}.
\]
Moreover we define
\[
A_{i+1}^b:= A_i^b \wedge \con_j \neg (A_i^j \rhd q_j).
\]
Now we will see that under the new valuation $V_{i+1}^b$,
\begin{itemize}
\item[$(\romannumeral 1 )$] 
$u\Vdash A_{i+1}^b \Rightarrow F\models \bis{i+1}(b,u)$,
\item[$(\romannumeral 2 )$]
$ F\models \bis{i+1}(b,u) \Rightarrow u\Vdash A_{i+1}^b$.
\end{itemize}
For $(\romannumeral 1 )$ we reason as follows. Suppose $u\Vdash A_{i+1}^b$. 
Then
also $u\Vdash A_i^b$ and thus $F\models \bis{i}(b,u)$. It remains to show that 
\[
F\models \forall c\; (bRc \rightarrow \exists c'\;
(uRc'  \wedge
\bis{i} (c,c') \wedge cS_bc' \wedge c'S_u\!{\uparrow}\subseteq cS_b\!{\uparrow})).
\]

To this purpose we consider and fix some $x_j$ in $b{\uparrow}$.
As $u\Vdash A_{i+1}^b$, we get that 
$u\Vdash \neg (A_i^j \rhd q_j)$. Thus, for some $c'\,{\in}\,\cone{u}{q_j}$,
$c'\Vdash A_i^j$. Clearly $c'\Vdash \neg q_j$ whence $x_jS_bc'$. Also 
$\forall t\ (c'S_uy \Rightarrow y\Vdash \neg q_j)$ 
which, by the definition of $V_{i+1}^b$
translates to $c'S_u{\uparrow} \subseteq x_jS_b{\uparrow}$. Clearly also $uRc'$.
By $c'\Vdash A_i^j$ and the induction hypothesis we get that 
$\bis{i}(x_j,c')$. Indeed we see that $F\models \bis{i+1}(b,u)$.

For $(\romannumeral 2 )$ we reason as follows.
As $F\models \bis{i+1}(b,u)$, also $F\models \bis{i}(b,u)$ and by the 
induction hypothesis, $u\Vdash A_i^b$. It remains to show that 
$u\Vdash \neg (A_i^j\rhd q_j)$ for any $j$. 
So, let us fix some $j$. Then, by the second part of 
the $\bis{i+1}$ requirement we find a $c'$ such that
\[
uRc' \wedge
\bis{i} (x_j,c') \wedge x_jS_bc' \wedge c'S_u\!{\uparrow}\subseteq x_jS_b\!{\uparrow}. 
\]
Now, $uRc' \wedge x_jS_bc' \wedge c'S_u\!{\uparrow}\subseteq
x_jS_b\!{\uparrow}$ gives us that 
$c'\,{\in}\,\cone{u}{q_j}$. By $\bis{i} (x_j,c')$ and the induction
hypothesis we get that  
$c'\Vdash A_i^j$. Thus indeed $u\Vdash \neg (A_i^j \rhd q_j)$.
\end{proof}

Note that in the proof of this lemma, we have only used conjunctions to construct
the formulas $A_i^b$.

\begin{definition}
For every $i$ we define the frame condition $\mathcal{C}_i$ to be
\[
\forall \, a,b \; (aRb \rightarrow \exists u \;
(bS_au \wedge \bis{i}(b,u) \wedge \forall \, d,e\;
(uS_adRe \rightarrow bRe))).
\]
\end{definition}

\begin{lemma}\label{lemm:partiallev}
Let $F$ be a finite frame. For all $i$, we have that
\begin{center}
for all 
$A{\in}{\sf ES_2^i}$,
$F\models A\rhd B \rightarrow A\wedge \Box C \rhd B \wedge \Box C$,\\
if and only if\\
$F\models \mathcal{C}_i$.
\end{center}
\end{lemma}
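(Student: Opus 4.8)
The plan is to prove the biconditional by separately establishing each direction, using the two preceding lemmas as the essential bridge between the modal validity of the Beklemishev schema and the first-order frame condition $\mathcal{C}_i$. Throughout I work with a fixed finite frame $F$ and the approximation stage ${\sf ES}_2^i$, so that I only need to handle realizations $A$ of bounded complexity. The two lemmas tell me, respectively, that $\bis{i}$-similarity transfers truth of ${\sf ES}_2^i$-formulas upward (Lemma~\ref{lemm:bekbis}), and that $\bis{i}$-similarity is itself \emph{definable} by a specific ${\sf ES}_2^i$-formula $A_i^b$ under a suitable valuation (Lemma~\ref{lemm:beksent}). These are exactly the ingredients needed to go back and forth between semantic validity and the frame condition.

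For the direction $F\models\mathcal{C}_i \Rightarrow$ validity, I would argue semantically. Fix any model on $F$, any ${\sf ES}_2^i$-formula $A$, arbitrary $B,C$, and a world $w$ with $w\Vdash A\rhd B$ and $w\Vdash A\wedge\Box C$; the goal is to witness $w\Vdash A\wedge\Box C\rhd B\wedge\Box C$. So I take any $a$ with $wRa$ and $a\Vdash A\wedge\Box C$. Since $a\Vdash A$ and $w\Vdash A\rhd B$, there is some $v$ with $aS_wv\Vdash B$. The problem is that $v$ need not force $\Box C$. This is where $\mathcal{C}_i$ enters: applied with the pair $(w,a)$ — or rather applied to locate, above the $B$-witness, a node that both looks like it ($\bis{i}$) and whose $R$-successors are trapped below the relevant world — it supplies a replacement node $u$ with $bS_au$, $\bis{i}(b,u)$, and the successor-containment clause $uS_adRe\Rightarrow bRe$. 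The containment clause is precisely what forces $u\Vdash\Box C$ to be inherited from the fact that $\Box C$ held further down, while $\bis{i}(b,u)$ together with Lemma~\ref{lemm:bekbis} guarantees $u$ still satisfies the ${\sf ES}_2^i$-formula $A$ (and hence, via the original $\rhd$, a $B$-witness survives). Assembling these gives the required $S_w$-successor forcing $B\wedge\Box C$.

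For the converse, validity $\Rightarrow F\models\mathcal{C}_i$, I would argue by contraposition, constructing a specific instance of the schema that fails whenever $\mathcal{C}_i$ fails. Suppose $aRb$ but no $u$ witnesses $\mathcal{C}_i$ for this pair. I instantiate $A$ by the formula $A_i^b$ from Lemma~\ref{lemm:beksent} under its valuation $V_i^b$, so that the extension of $A$ is exactly $\{\,u \mid \bis{i}(b,u)\,\}$, and I choose $B$ and $\Box C$ so that the failure of $\mathcal{C}_i$ blocks the conclusion $A\wedge\Box C\rhd B\wedge\Box C$ while the premise $A\rhd B$ still holds; here $C$ is designed (via a fresh variable true exactly on $b{\uparrow}$) so that $\Box C$ pins down being below $b$, mirroring the role of the containment clause $uS_adRe\Rightarrow bRe$. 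The definability lemma is what makes this instantiation legitimate as a genuine ${\sf ES}_2^i$-realization rather than an appeal to an abstract predicate.

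The main obstacle I anticipate is the bookkeeping in the second direction: choosing $B$ and $C$ so that the modal counterexample tracks the failure of $\mathcal{C}_i$ exactly, while simultaneously keeping the premise $A\rhd B$ true and respecting the frame conditions (2)--(5) on the $S_a$-relations. In particular, the clause $\forall d,e\,(uS_adRe\to bRe)$ couples the $S_a$-successors of the witness with the $R$-cone of $b$, and engineering a valuation of $C$ that makes $\Box C$ behave like membership in $b{\uparrow}$ — so that a \emph{missing} witness $u$ translates into a genuine failure of the boxed conjunct — requires care. The upward-similarity transfer of Lemma~\ref{lemm:bekbis} and the controlled, conjunction-only construction noted after Lemma~\ref{lemm:beksent} should make this manageable, but verifying that every required world satisfies the right ${\sf ES}_2^i$-formula at each stage is the delicate part.
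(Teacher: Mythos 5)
Your overall strategy coincides with the paper's: the soundness direction is a semantic argument from $\mathcal{C}_i$ via the transfer Lemma~\ref{lemm:bekbis}, and the converse is a contraposition using the definability Lemma~\ref{lemm:beksent} plus fresh variables. But the execution has two genuine problems. In the first direction you misplace the mechanism: after producing the $B$-witness $v$ of $a$ you say that $\mathcal{C}_i$ locates ``above the $B$-witness'' a node that looks like \emph{it}, and that the containment clause ``forces $u\Vdash\Box C$''. Neither is what happens. The condition $\mathcal{C}_i$, applied to the pair $(w,a)$, produces a node $u$ that is $\bis{i}$-similar to $a$ (the node forcing $A\wedge\Box C$), not to any $B$-witness; Lemma~\ref{lemm:bekbis} then gives $u\Vdash A$, and one applies $w\Vdash A\rhd B$ \emph{afresh} at $u$ to obtain a new witness $d$ with $uS_wd$ and $d\Vdash B$. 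The containment clause $uS_wdRe\Rightarrow aRe$ is then used to show that this $d$ (not $u$) forces $\Box C$: all $R$-successors of $d$ lie in $a{\uparrow}$, where $C$ holds; finally $aS_wuS_wd$ gives $aS_wd$ by transitivity of $S_w$. (That $u\Vdash\Box C$ is true but irrelevant, and follows already from $\bis{0}(a,u)$, i.e.\ $a{\uparrow}=u{\uparrow}$.) Your first witness $v$ plays no role and should simply be discarded.

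The second and more serious gap is in the converse: you never construct $B$, and that construction is the actual content of this direction. It does not suffice to say that $B$ and $C$ are ``chosen so that the failure of $\mathcal{C}_i$ blocks the conclusion while the premise holds''. The paper takes $B$ to be a fresh atom $q$ whose valuation is $y\Vdash q \Leftrightarrow y\in\mathcal{D}\vee\neg(bS_ay)$, where $\mathcal{D}=\{d\mid bS_adRe\wedge\neg bRe \mbox{ for some } e\}$, and $C$ a fresh atom $s$ with $y\Vdash s\Leftrightarrow bRy$ (this part agrees with your choice). The disjunct $\neg(bS_ay)$ is indispensable: the premise $A_i^b\rhd q$ must be verified at \emph{every} $R$-successor $b'$ of $a$ forcing $A_i^b$, including those that are not $S_a$-accessible from $b$, and for those the failure of $\mathcal{C}_i$ gives you nothing --- they are declared to satisfy $q$ outright. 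For the $S_a$-accessible ones, the failure clause applied to $u=b'$ hands you a $d$ with $b'S_ad$ and (using transitivity, $bS_ad$) $d\in\mathcal{D}$, so $d\Vdash q$. The conclusion then fails at $b$ itself: $b\Vdash A_i^b\wedge\Box s$ by reflexivity of $\bis{i}$, but any $S_a$-successor of $b$ forcing $q$ must lie in $\mathcal{D}$ and hence has an $R$-successor outside $b{\uparrow}$, refuting $\Box s$. Without this two-case valuation for $q$, simultaneously validating the premise and killing the conclusion, the converse direction is only a statement of intent --- and it is exactly the part you deferred as ``bookkeeping''.
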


\begin{proof}
First suppose that $F\models \mathcal{C}_i$ and that 
$a\Vdash A\rhd B$ for some $A{\in}{\sf ES_2^i}$ and some valuation on $F$. 
We will show that 
$a\Vdash A\wedge \Box C \rhd B\wedge \Box C$ for any $C$.
Consider therefore some $b$ with $aRb$ and $b\Vdash A\wedge \Box C$.
The $\mathcal{C}_i$ condition provides us with a $u$ such that 
\[
bS_au \wedge \bis{i}(b,u) \wedge \forall \, d,e\;
(uS_adRe \rightarrow bRe) \ \ \ (*)
\]
As $F\models \bis{i}(b,u)$, we get by Lemma \ref{lemm:bekbis} that 
$u\Vdash A$. Thus, as $aRu$ and $a\Vdash A\rhd B$, we know
that there is some $d$ with $uS_ad$ and $d\Vdash B$. If now
$dRe$, by $(*)$, also $bRe$ and hence $e\Vdash C$. Thus, 
$d\Vdash B\wedge \Box C$. Clearly $bS_ad$ and thus
$a\Vdash A\wedge \Box C \rhd B\wedge \Box C$.

For the opposite direction we reason as follows.
Suppose that $F \not \models \mathcal{C}_i$. Thus, we can find 
$a,b$ with 
\[
aRb \wedge \forall u\; 
(bS_au \wedge \bis{i}(b,u) \rightarrow \exists \, d,e \; 
(uS_adRe \wedge \neg bRe)) \ \ \ (**).
\]
By Lemma \ref{lemm:beksent} we can find a valuation $V_i^b$ and a sentence
$A_i^b{\in}{\sf ES_2^i}$ such that 
$u\Vdash A_i^b\Leftrightarrow F\models \bis{i}(b,u)$. Let $q$ and $s$ be 
fresh variables. Moreover, let $\mathcal{D}$ be the following set.
\[
\mathcal{D} := \{  d{\in} F \mid
bS_adRe \wedge \neg bRe \mbox{ for some $e$ }\} .
\]
We define a valuation $V$ that is an extension of $V_i^b$ by 
stipulating that
\[
\begin{array}{lll}
y \Vdash q & \leftrightarrow & (y{\in} \mathcal{D}) \vee \neg (bS_ay), \\
y\Vdash s &\leftrightarrow & bRy.  \\
\end{array}
\]
We now see that 
\begin{itemize}
\item[$(\romannumeral 1 )$] 
$a\Vdash A_i^b \rhd q $,
\item[$(\romannumeral 2 )$]
$a\Vdash \neg (A_i^b \wedge \Box s \rhd q \wedge \Box s)$.
\end{itemize}
For $(\romannumeral 1 )$ we reason as follows. Suppose that 
$aRb'$ and $b'\Vdash A_i^b$. If $\neg (bS_a b')$, $b'\Vdash q$ and we are 
done. So, we consider the case in which $bS_ab'$. 
As $\bis{i}(b,b')$, $(**)$ now yields us a
$d{\in}\mathcal{D}$ such that 
$b'S_ad$. Clearly $bS_ad$ and thus, by definition, $d\Vdash q$.

To see $(\romannumeral 2 )$ we notice that $b\Vdash A_i^b \wedge \Box s$.
But if $bS_ay$ and $y\Vdash q$, by definition $y{\in} \mathcal{D}$ and 
thus $y\Vdash \neg \Box s$. Thus $b{\in}\cone{a}{q\wedge \Box s}$ and
$a\Vdash \neg (A_i \wedge \Box s \rhd q \wedge \Box s)$.
\end{proof}

The following theorem is now an immediate corollary of the above reasoning.

\begin{theorem}
A finite frame $F$ validates all instances of Beklemishev's principle 
if and only if $\forall i\; F\models \mathcal{C}_i$.
\end{theorem}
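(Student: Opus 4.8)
The plan is to read the result off directly from Lemma~\ref{lemm:partiallev} by stratifying the class ${\sf ES_2}$. The crucial structural fact, already recorded in the excerpt, is that ${\sf ES_2} = \bigcup_i {\sf ES_2^i}$, so that validating \emph{all} instances of \principle{B} is the same as validating, for every $i$ and every $A \in {\sf ES_2^i}$ (and all $B$, $C$), the implication $A \rhd B \to A \wedge \Box C \rhd B \wedge \Box C$. Since each frame condition $\mathcal{C}_i$ is, by Lemma~\ref{lemm:partiallev}, equivalent to exactly this stage-$i$ fragment, the global statement should fall out by a routine interchange of the two universal quantifiers, one over $i$ and one over the instances.

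Concretely, I would argue both directions as a single chain of equivalences. For the forward direction, suppose $F$ validates every instance of \principle{B}. Fixing an arbitrary $i$, the inclusion ${\sf ES_2^i} \subseteq {\sf ES_2}$ guarantees that $F \models A \rhd B \to A \wedge \Box C \rhd B \wedge \Box C$ for every $A \in {\sf ES_2^i}$ and all $B$, $C$; Lemma~\ref{lemm:partiallev} then yields $F \models \mathcal{C}_i$, and since $i$ was arbitrary we obtain $\forall i\; F \models \mathcal{C}_i$. For the converse, assume $\forall i\; F \models \mathcal{C}_i$ and take any instance of \principle{B}, that is, any $A \in {\sf ES_2}$ together with arbitrary $B$ and $C$. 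Because ${\sf ES_2} = \bigcup_i {\sf ES_2^i}$, we have $A \in {\sf ES_2^i}$ for some particular $i$; applying Lemma~\ref{lemm:partiallev} to that $i$, using $F \models \mathcal{C}_i$, gives $F \models A \rhd B \to A \wedge \Box C \rhd B \wedge \Box C$. As the instance was arbitrary, $F$ validates all of \principle{B}.

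There is essentially no obstacle here: all the genuine work lives in Lemmas~\ref{lemm:bekbis}, \ref{lemm:beksent} and \ref{lemm:partiallev}, which together establish the per-stage correspondence via the simulation relations $\bis{i}$ and the critical-cone construction. The only point that warrants a moment's care is the matching of quantifiers. Lemma~\ref{lemm:partiallev} already ranges over all $A \in {\sf ES_2^i}$ and (implicitly) over all $B$ and $C$, whereas each $\mathcal{C}_i$ is a pure frame condition mentioning neither $B$ nor $C$. Consequently the equivalence is genuinely indexed by the single parameter $i$, and the passage from the stratified statement to the unstratified one is exactly the observation that a universal claim over ${\sf ES_2}$ decomposes as the conjunction, over $i$, of the universal claims over the ${\sf ES_2^i}$.
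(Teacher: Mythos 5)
Your proposal is correct and takes essentially the same route as the paper: the paper presents this theorem as ``an immediate corollary of the above reasoning,'' meaning exactly Lemma~\ref{lemm:partiallev} combined with the fact that ${\sf ES_2} = \bigcup_i {\sf ES_2^i}$, which is the quantifier-interchange argument you spell out. Your only added content is making the two directions explicit, including the observation that each $\mathcal{C}_i$ mentions neither $B$ nor $C$, which is a faithful elaboration of what the paper leaves implicit.
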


\begin{definition}\label{defn:b_i}
Let \principle{B_i} be the principle
$A\rhd B \rightarrow A \wedge \Box C \rhd B \wedge \Box C$ for 
$A \in {\sf ES_2^i}$.
\end{definition}

\begin{corollary}
For a finite frame we have 
$F\models \principle{B_i} \Leftrightarrow F \models \mathcal{C}_i$.
\end{corollary}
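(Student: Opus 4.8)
The plan is to obtain this directly from Lemma~\ref{lemm:partiallev}, since the corollary is nothing more than that lemma restated with the abbreviation \principle{B_i} substituted for the explicit schema. First I would unfold the meaning of $F\models\principle{B_i}$. By Definition~\ref{defn:b_i}, $\principle{B_i}$ is the schema $A\rhd B\rightarrow A\wedge\Box C\rhd B\wedge\Box C$ ranging over $A\in{\sf ES_2^i}$ with arbitrary side-formulas $B$ and $C$; and $F\models\principle{B_i}$ means, as usual, that every instance of this schema is valid on $F$, i.e.\ holds at every world under every valuation. Spelling this out, $F\models\principle{B_i}$ asserts precisely that for all $A\in{\sf ES_2^i}$ (and all $B$, $C$) one has $F\models A\rhd B\rightarrow A\wedge\Box C\rhd B\wedge\Box C$.

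This last statement is verbatim the left-hand side of the biconditional in Lemma~\ref{lemm:partiallev}. Hence, for the forward direction, assuming $F\models\principle{B_i}$ hands us exactly the hypothesis of that lemma, and the lemma yields $F\models\mathcal{C}_i$. For the converse, assuming $F\models\mathcal{C}_i$, the lemma returns the validity on $F$ of every instance of the schema with $A\in{\sf ES_2^i}$, which is by definition $F\models\principle{B_i}$. Both implications are therefore immediate once the definitions are laid side by side.

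The only point requiring (minimal) care is the bookkeeping of quantifiers: frame validity of the schema already absorbs the universal quantification over valuations, over worlds, and over the arbitrary formulas $B$ and $C$, so that the schematic assertion $F\models\principle{B_i}$ and the ``for all $A\in{\sf ES_2^i}$'' clause of Lemma~\ref{lemm:partiallev} coincide with no gap between them. There is no genuine obstacle here; the substantive work — the two simulation arguments of Lemmas~\ref{lemm:bekbis} and~\ref{lemm:beksent}, which feed into Lemma~\ref{lemm:partiallev} — has already been carried out, and this corollary is a purely notational specialization of the per-stage equivalence.
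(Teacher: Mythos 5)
Your proposal is correct and takes exactly the route the paper intends: the paper states this corollary without further argument as an immediate consequence of Lemma~\ref{lemm:partiallev} once Definition~\ref{defn:b_i} is in place, which is precisely your unfolding of $F\models\principle{B_i}$ into the ``for all $A\in{\sf ES_2^i}$, $F\models A\rhd B \rightarrow A\wedge \Box C \rhd B \wedge \Box C$'' clause of that lemma. Nothing further is needed.
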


For the class of finite frames, we can get rid of the
universal quantification in the frame condition of Beklemishev's 
principle. Remember that
\depth{x}, the depth of a point $x$, is the length of the longest
chain of $R$-successors starting in $x$.
\begin{lemma}
If $\bis{n} (x,x')$, then $\depth{x}=\depth{x'}$.
\end{lemma}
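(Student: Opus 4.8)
The plan is to reduce everything to the base level of the simulation relation. First I would observe that, directly from the definition, $\bis{n+1}(b,u)$ has $\bis{n}(b,u)$ as its first conjunct, so by an immediate induction $\bis{n}(x,x')$ entails $\bis{0}(x,x')$ for every $n$. Since $\bis{0}(x,x')$ is by definition the statement $x{\uparrow} = x'{\uparrow}$, the only content of the hypothesis I actually need is that $x$ and $x'$ have exactly the same set of $R$-successors. All the higher, asymmetric simulation clauses are irrelevant for this lemma.

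Next I would exploit the fact that depth is determined entirely by the up-set. Concretely, consider any $R$-chain starting in $x$, say $x R y_1 R y_2 R \cdots R y_k$. Its first step lands in some $y_1 \in x{\uparrow} = x'{\uparrow}$, so $x' R y_1$ holds as well, and hence $x' R y_1 R y_2 R \cdots R y_k$ is an $R$-chain starting in $x'$ of exactly the same length. This gives a length-preserving correspondence between chains out of $x$ and chains out of $x'$, and symmetrically (using $x'{\uparrow} = x{\uparrow}$) a correspondence in the reverse direction. Taking the longest chain on each side then yields $\depth{x} = \depth{x'}$.

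The argument is short and there is no real obstacle. The one point requiring care is that $\depth{\cdot}$ is well-defined at all, which is guaranteed because we work with finite frames whose $R$ is transitive and conversely well-founded, so maximal chain lengths exist and $R$ is irreflexive (ensuring the shifted chain $x' R y_1 R \cdots R y_k$ is genuinely a chain). It is worth remarking that the asymmetry of $\bis{i}$ for $i \geq 1$ plays no role here precisely because its base level $\bis{0}$ is symmetric; this is why the conclusion comes out as a genuine equality of depths even though $\bis{n}$ itself need not be symmetric.
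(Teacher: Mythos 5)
Your proof is correct and is essentially the paper's own argument: the paper also reduces $\bis{n}(x,x')$ to $\bis{0}(x,x')$, i.e.\ $x{\uparrow}=x'{\uparrow}$, and concludes $\depth{x}=\depth{x'}$ from the equality of up-sets (which you merely spell out via the chain correspondence that the paper leaves implicit).
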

\begin{proof}
$\bis{n} (x,x') \Rightarrow \bis{0} (x,x') \Rightarrow x{\uparrow}=x'{\uparrow}$.
\end{proof}
\begin{lemma}\label{lemm:justonesimulation}
If $\bis{n}(x,x') \ \& \ \depth{x}\leq n$, then $\bis{m}(x,x')$ for all $m$.
\end{lemma}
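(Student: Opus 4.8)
The plan is to argue by induction on $\depth{x}$, with an inner induction on $m$ carried out inside the inductive step. I first record the trivial half: since the definition builds $\bis{k+1}(x,x')$ as $\bis{k}(x,x')$ conjoined with a universal clause, the similarity relations decrease in strength (this is exactly the implication $\bis{k+1}(b,u)\to\bis{k}(b,u)$ already used in Lemma~\ref{lemm:bekbis}), so $\bis{n}(x,x')$ immediately yields $\bis{m}(x,x')$ for every $m\leq n$. Thus only the indices $m>n$ carry content, and these will be reached by pushing the inner induction upward from the base value $m=n$, which is the hypothesis $\bis{n}(x,x')$ itself.

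For the base case $\depth{x}=0$, the node $x$ has no $R$-successors, so $\bis{n}(x,x')$ forces $\bis{0}(x,x')$, that is $x{\uparrow}=x'{\uparrow}=\emptyset$; the universal clause of every $\bis{m+1}$ is then vacuously true, and a one-line induction on $m$ gives $\bis{m}(x,x')$ for all $m$. For the inductive step I assume the statement for all nodes of depth strictly below $d:=\depth{x}$, and I take $\bis{n}(x,x')$ with $d\leq n$, so in particular $n\geq 1$ and $\bis{n}$ genuinely carries a universal clause. I then prove $\bis{m}(x,x')$ by induction on $m$; the step from $\bis{m}$ to $\bis{m+1}$ (for $m\geq n$) has $\bis{m}(x,x')$ available as the inner hypothesis, so only the universal clause of $\bis{m+1}$ remains to be verified. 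For a fixed successor $c$ with $xRc$ I invoke the universal clause already guaranteed by $\bis{n}(x,x')$, obtaining $c'$ with $x'Rc'$, $\bis{n-1}(c,c')$ and $c'S_{x'}{\uparrow}\subseteq cS_x{\uparrow}$. Since $xRc$ we have $\depth{c}\leq d-1\leq n-1$, so the pair $(c,c')$ meets the hypotheses of the lemma at level $n-1$; as $\depth{c}<d$, the outer induction hypothesis upgrades $\bis{n-1}(c,c')$ to $\bis{m}(c,c')$. The same $c'$ then witnesses the universal clause of $\bis{m+1}(x,x')$ (the $S$-condition carries over verbatim), closing both inductions.

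The only delicate point, and the step I would flag as the main obstacle, is the index bookkeeping tying the two inductions together: one must check that an $R$-successor $c$ of a node of depth $d\leq n$ has depth at most $n-1$, so that the witness $c'$ supplied at level $n$ already satisfies the hypotheses of the lemma one level lower, letting the depth-induction hypothesis promote it to arbitrarily high similarity. Once this is in place everything else is a mechanical unfolding of the definition of $\bis{}$, and no properties of $S$ beyond those already packaged in the universal clause are needed.
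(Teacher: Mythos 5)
Your proof is correct and takes essentially the same route as the paper's: a double induction whose inner component climbs the index $m$, with the identical key step that an $R$-successor $c$ of $x$ drops both the depth and the similarity index by one, so the outer induction hypothesis upgrades the witness from $\bis{n-1}(c,c')$ to $\bis{m}(c,c')$ for all $m$. The only difference is that you run the outer induction on \depth{x} while the paper runs it on $n$; since the hypothesis couples the two (\depth{x} $\leq n$, both decreasing in lockstep at successors), this is an immaterial reorganization of the same argument.
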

\begin{proof}
The proof goes by induction on $n$. For $n=0$, the result is clear. So,
we consider some $x,x'$ with $\bis{n+1}(x,x') \ \& \ \depth{x} \leq n+1$.
We are done if we can show $\bis{m+1}(x,x')$ for $m\geq n+1$.

This, we prove by a subsidiary induction on $m$. The basis is trivial.
For the inductive step, we assume $\bis{m}(x,x')$ for some 
$m\geq n+1$ and set out to prove $\bis{m+1}(x,x')$, that is
\[
\bis{m}(x,x') \wedge \forall y\ 
(xRy \rightarrow \exists y'\ 
(yS_xy' \wedge \bis{m}(y,y') \wedge y'S_{x'}{\uparrow}\subseteq yS_x{\uparrow}))
\]
The first conjunct is precisely the induction hypothesis. For the second
conjunct we reason as follows.
As $m\geq n+1$, certainly $\bis{n+1} (x,x')$.
We consider $y$ with $xRy$.
By $\bis{n+1}(x,x')$, we find a $y'$ with
\[
yS_xy' \wedge \bis{n}(y,y') \wedge y'S_{x'}{\uparrow}\subseteq yS_x{\uparrow} .
\]
As $xRy$ and $\depth{x} \leq n+1$, we see $\depth{y}\leq n$. Hence by the 
main induction, we get that $\bis{m}(y,y')$ and we are done.
\end{proof}
\begin{definition}
A \principle{B}-simulation on a frame is a binary relation \bis{} for which the 
following holds.
\begin{enumerate}
\item
$\bis{} (x,x') \rightarrow x{\uparrow} = x'{\uparrow}$

\item
$\bis{} (x,x') \ \& \ xRy \rightarrow \exists y' 
(yS_x y' \wedge \bis{} (y,y') \wedge y'S_{x'}{\uparrow}\subseteq yS_x{\uparrow})$

\end{enumerate}
\end{definition}
If $F$ is a finite frame that satisfies $\mathcal{C}_i$ for all $i$, we 
can consider $\bigcap_{i\in \omega} \bis{i}$. This will certainly be
a \principle{B}-simulation. 
\begin{definition}
The frame condition \cebe is defined as follows. $F\models \cebe$
if and only if there is a \principle{B}-simulation \bis{} on $F$ such that
for all $x$ and $y$,
\[
xRy \rightarrow \exists y' (yS_xy' \wedge \bis{} (y,y') \wedge 
\forall d,e\ 
(y'S_xdRe \rightarrow yRd)).
\]
\end{definition}
An immediate consequence of Lemma \ref{lemm:justonesimulation} is
the following theorem.
\begin{theorem}
For $F$ a finite frame, we have 
\[
F\models \principle{B} \ \  \Leftrightarrow \ \  F \models \cebe .
\]
\end{theorem}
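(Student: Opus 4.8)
The plan is to reduce the claim to the equivalence $\forall i\, F\models\mathcal{C}_i \Leftrightarrow F\models\cebe$, since the preceding theorem already gives $F\models\principle{B}\Leftrightarrow\forall i\, F\models\mathcal{C}_i$. Everything then comes down to comparing the uniform condition $\cebe$, which quantifies over a single $\principle{B}$-simulation $\bis{}$, with the family of stagewise conditions $\mathcal{C}_i$ built from the finite approximants $\bis{i}$. The two ingredients I would isolate are a monotonicity fact pushing any $\principle{B}$-simulation into every $\bis{i}$, and the collapse of the family $\{\bis{i}\}$ to a single simulation supplied by Lemma~\ref{lemm:justonesimulation}.

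The technical heart is the \emph{monotonicity lemma}: every $\principle{B}$-simulation $\bis{}$ satisfies $\bis{}(b,u)\Rightarrow\bis{i}(b,u)$ for all $i$. I would prove it by induction on $i$. The base case is exactly clause (1) of a $\principle{B}$-simulation, $b{\uparrow}=u{\uparrow}$, which is $\bis{0}(b,u)$. For the step, given $\bis{}(b,u)$ and $bRc$, clause (2) supplies $c'$ with $cS_bc'$, $\bis{}(c,c')$ and $c'S_u{\uparrow}\subseteq cS_b{\uparrow}$; the only gap to the defining clause of $\bis{i+1}$ is that the latter asks for $uRc'$ rather than $cS_bc'$, and this is bridged by clause (1): from $cS_bc'$ we get $bRc'$, and $b{\uparrow}=u{\uparrow}$ turns this into $uRc'$. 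The induction hypothesis gives $\bis{i}(c,c')$, finishing the step; as a byproduct $\bigcap_i\bis{i}$ is the largest $\principle{B}$-simulation. Using this, the direction $\cebe\Rightarrow\forall i\,\mathcal{C}_i$ is quick: fixing the witnessing $\bis{}$ and a pair $a,b$ with $aRb$, the clause of $\cebe$ (at $x=a,y=b$) produces $u$ with $bS_au$, $\bis{}(b,u)$ and the landing-point condition; monotonicity upgrades this to $\bis{i}(b,u)$ for every $i$, and the landing-point condition yields the successor clause of $\mathcal{C}_i$, transitivity of $R$ mediating between an $S_a$-landing point and its $R$-successor.

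For the direction $\forall i\,\mathcal{C}_i\Rightarrow\cebe$, I would set $\bis{}:=\bigcap_i\bis{i}$, which is a $\principle{B}$-simulation by the remark preceding the statement. Fix $x,y$ with $xRy$. Since $F$ is finite, take $n=\depth{x}$, so that $\depth{y}\leq n$. Applying $\mathcal{C}_n$ to the pair $x,y$ yields a $u$ with $yS_xu$, $\bis{n}(y,u)$ and the required successor condition. Because $\depth{y}\leq n$, Lemma~\ref{lemm:justonesimulation} upgrades $\bis{n}(y,u)$ to $\bis{m}(y,u)$ for every $m$, hence to $\bis{}(y,u)$; the successor condition coming from $\mathcal{C}_n$ then supplies the inner clause of $\cebe$. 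This is precisely the sense in which the theorem is an immediate consequence of Lemma~\ref{lemm:justonesimulation}.

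The step I expect to be the main obstacle is the bookkeeping around the landing-point clause. The conclusion furnished by $\mathcal{C}_n$ concerns the $R$-successor $e$ of the $S_x$-landing point $d$, namely $yRe$; this is exactly the form needed in the style of Lemma~\ref{lemm:partiallev} to certify $d\Vdash\Box C$, since $y\Vdash\Box C$ only constrains the $R$-successors of $y$. One must therefore read the inner clause of $\cebe$ consistently about this successor $e$ and use transitivity of $R$ to move between $d$ and $e$ when the two formulations name different points. Getting the depth bound $\depth{y}\leq n$ right is what makes Lemma~\ref{lemm:justonesimulation} applicable and collapses the whole family $\{\bis{i}\}$ to the single simulation $\bis{}$; once these two points are settled, both implications fall out of the monotonicity lemma and the collapse.
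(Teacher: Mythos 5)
Your proposal is correct and is, in essence, exactly the argument the paper leaves implicit behind the phrase ``immediate consequence of Lemma~\ref{lemm:justonesimulation}'': reduce via the preceding theorem to showing $\forall i\, F\models\mathcal{C}_i \Leftrightarrow F\models\cebe$; obtain $\cebe\Rightarrow\forall i\,\mathcal{C}_i$ from the monotonicity fact that any \principle{B}-simulation is contained in every $\bis{i}$ (your induction, converting $cS_bc'$ into $uRc'$ via frame property~2 and clause~(1), is precisely the glue the paper never writes down); and obtain the converse by taking $\bis{}=\bigcap_i\bis{i}$, which the remark makes a \principle{B}-simulation, and collapsing $\bis{n}(y,u)$ to $\bis{}(y,u)$ with Lemma~\ref{lemm:justonesimulation} at any $n\geq\depth{y}$.

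Your caution about the ``landing-point clause'' is also well placed, and is worth making explicit: the inner clause of $\cebe$ as printed, $y'S_xdRe \rightarrow yRd$, is strictly stronger than the corresponding clause of $\mathcal{C}_i$, $uS_adRe \rightarrow bRe$, and under the printed reading the theorem is actually false. In the three-element chain $aRbRc$ (with $aRc$, and the minimal $S$-relations), an \principle{M}-frame and hence a validator of \principle{B}, clause~(1) of a \principle{B}-simulation forces the witness for the pair $a,b$ to be $y'=b$ itself, and then $bS_abRc$ would require $bRb$, contradicting converse well-foundedness. So the printed $yRd$ must be read as $yRe$, matching $\mathcal{C}_i$; under that corrected reading both of your directions (one of which needs only transitivity of $R$, the other being verbatim) go through, and your proof is complete.
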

\noindent
Note that the \principle{M}-frame condition can be seen as a special 
case of the frame condition of \principle{B}: we demand that \bis{} be
the identity relation.

It is not hard to see that the frame condition of $\principle{M_0}$ follows from
$\mathcal{C}_0$. And indeed, $\ilb \vdash \principle{M_0}$ as
$\Diamond A \in {\sf ES_2}$ and $A\rhd B \rightarrow \Diamond A \rhd B$.
Actually, we have that $\extil{B_1}\vdash \principle{M_0}$.

\section{Beklemishev and Zambella}


\newcommand{\edtwo}{{\sf BS_1}}


Zambella proved in \cite{Zam94} a fact concerning $\Pi_1$-consequences of theories
with a 
$\Pi_2$ 
axiomatization.
As we shall see, his result has some repercussions on the 
study of the \inty logic of \pra.
\begin{lemma}[Zambella]\label{lemm:zambie}
Let $T$ and $S$ be two theories axiomatized by $\Pi_2$-axioms.
If $T$ and $S$ have the same $\Pi_1$-consequences then $T+S$ 
has no more $\Pi_1$-consequences than $T$ or $S$.
\end{lemma}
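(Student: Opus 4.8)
The plan is to prove the lemma model-theoretically, by a back-and-forth chain argument resting on the fact that $\Pi_2$ sentences are preserved under unions of chains of $\Sigma_1$-preserving embeddings. Throughout I write $M \hookrightarrow N$ for an embedding that preserves $\Sigma_1$ formulas (with parameters) upward. Since $\Sigma_1$-upward preservation is the contrapositive of $\Pi_1$-downward preservation, such an embedding automatically preserves $\Pi_1$ formulas downward, and because $\Delta_0 \subseteq \Sigma_1 \cap \Pi_1$ it makes every $\Delta_0$ formula absolute between $M$ and $N$.

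First I would establish an \emph{extension lemma}: for every $M \models T$ there exist $N \models S$ and an embedding $M \hookrightarrow N$ (and symmetrically with $T,S$ interchanged, which is where the hypothesis that $T$ and $S$ share their $\Pi_1$-consequences is used in both directions). This follows by compactness applied to $S$ together with the $\Sigma_1$-diagram of $M$, i.e.\ all $\Sigma_1$ sentences with parameters from $M$ that hold in $M$. If this were inconsistent, then $S$ would prove the negation of a finite conjunction $\sigma(\bar a)$ of such $\Sigma_1$ sentences; generalizing on the fresh parameters $\bar a$ gives $S \vdash \forall \bar x\, \neg\sigma(\bar x)$, which is a $\Pi_1$ sentence. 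As $T$ and $S$ have the same $\Pi_1$-consequences, $T \vdash \forall \bar x\, \neg\sigma(\bar x)$, contradicting $M \models T$ and $M \models \sigma(\bar a)$. Since the $\Sigma_1$-diagram contains the full atomic diagram, any model of the consistent theory yields the desired $\Sigma_1$-preserving embedding.

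Next I would iterate the extension lemma to build an alternating chain $M_0 \hookrightarrow N_0 \hookrightarrow M_1 \hookrightarrow N_1 \hookrightarrow \cdots$ with $M_i \models T$ and $N_i \models S$, and let $U$ be its union. The core claim is that $\Delta_0$ formulas are absolute between every member of the chain and $U$; granting this, $U \models T$ and $U \models S$ because each $\Pi_2$ axiom $\forall\bar x\,\exists\bar y\,\delta(\bar x,\bar y)$ holds cofinally in the chain and its witnesses, being $\Delta_0$-certified, transfer to $U$, while $\Pi_1$ sentences pass downward from $U$ to any $M_i$. Hence if $T+S \vdash \pi$ with $\pi \in \Pi_1$, then starting from an arbitrary $M_0 \models T$ we obtain $U \models T+S$, so $U \models \pi$, so $M_0 \models \pi$; as $M_0$ was arbitrary, $T \vdash \pi$, and by symmetry $S \vdash \pi$. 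Since the reverse inclusions are trivial and $T,S$ already agree on $\Pi_1$, this yields that $T+S$ has exactly the $\Pi_1$-consequences of $T$ (equivalently of $S$).

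The main obstacle is precisely this core claim, the $\Delta_0$-absoluteness between a chain member and the union, because the embeddings produced by the extension lemma are \emph{not} end-extensions: a bounded existential quantifier evaluated in $U$ may be witnessed by an element appearing only at a later stage and lying below a bound from an earlier model. I would handle this by structural induction on the $\Delta_0$ formula, combining the inductive hypothesis with pairwise $\Delta_0$-absoluteness between chain members (which follows from each single step being $\Delta_0$-absolute, absoluteness being closed under composition). In the critical bounded-existential case, a witness for $U \models \exists z \le t(\bar a)\,\psi$ lies in some later $M_m$, so $M_m \models \exists z\le t(\bar a)\,\psi$ by the inductive hypothesis, and pairwise absoluteness between $M_m$ and the earlier model containing $\bar a$ reflects this entire $\Delta_0$ formula back down. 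This closes the argument without requiring end-extensions.
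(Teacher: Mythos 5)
Your proof is correct. Be aware, however, that the paper does not actually contain its own proof of this lemma: it only cites Zambella's model-theoretic proof \cite{Zam94} and a finitary proof based on Herbrand's theorem sketched by Mints (see \cite{Bekv04}). Your argument is in effect a reconstruction of the model-theoretic route: the compactness-based extension lemma (correctly using both directions of the hypothesis that $T$ and $S$ share their $\Pi_1$-consequences), the alternating chain, and then the genuine crux, which you rightly isolate, namely $\Delta_0$-absoluteness between chain members and the union. This step is needed precisely because arithmetical $\Pi_2$ sentences have $\Delta_0$ rather than quantifier-free matrices, so the textbook Chang--{\L}o{\'s}--Suszko preservation theorem for unions of chains does not literally apply; your structural induction, which pushes the witness of a bounded existential into a later chain member and then reflects the whole $\Delta_0$ formula back down using pairwise absoluteness along the composite $\Sigma_1$-preserving embeddings, closes this correctly. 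What the paper's preferred (cited) route buys instead is formalizability: the Herbrand-style proof can be carried out inside a theory proving superexponentiation, and it is this \emph{formalized} version of the lemma---not the meta-level statement your argument establishes---that yields the arithmetical validity of the principle $\principle{Z(EP_2^c)}$ for the $\Pi_1$-conservativity logic of $\Pi_2$-axiomatized theories such as \pra. So your proof is a complete and correct proof of the lemma as stated, but a purely model-theoretic argument would not by itself support the paper's downstream use of it.
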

\noindent
In \cite{Zam94}, Zambella gave a model-theoretic proof of this lemma. 
As was sketched by G.\ Mints 
(see \cite{Bekv04}), also a finitary proof based on Herbrand's theorem
can be given. This proof can certainly be formalized in the 
presence of the superexponentiation function, thus it yields a principle for the
$\Pi_1$-conservativity logic of $\Pi_2$-axiomatized theories. We
denote it here as \principle{Z(EP_2^c)}. 
\[
{\sf Z(EP_2^c)} \ \ \ (A\equiv_{\Pi_1} B) \rightarrow A\rhd_{\Pi_1} A \wedge B  \ \ \mbox{ for $A$ and $B$ in 
${\sf EP_2^c}$.}
\]

where the~class  ${\sf EP_2^c}$ of modal formulas is defined as follows:

\[
\begin{array}{lll}
{\sf EP_2^c} & ::=  &\Box  \mathcal{A} \mid \neg \Box \mathcal{A} \mid
{\sf EP_2^c} \wedge 
{\sf EP_2^c} \mid {\sf EP_2^c} \vee {\sf EP_2^c}  \mid \mathcal{A} \rhd \mathcal{A}. \\
\end{array}
\]
The class ${\sf EP}_2^c$ is of course tailored so that any arithmetical realization will be provably $\Pi_2$. Note that the superscript $c$ is there to indicate that the $\rhd$ modality is to be interpreted as a formalization of the notion of $\Pi_1$ conservativity. It is not hard to see that the formalization of this notion is itself $\Pi_2$.  Moreover, note that this class coincides in extension with the earlier defined class ${\sf ES}_3$.

Since PRA is $\Pi_2$ axiomatized and proves totality of the supexp function the
principle ${\sf  Z(EP)}_2^c$ applies to PRA.

But there are
repercussions for the~interpretability logic of \pra as well. We know
that for reflexive theories  $\Pi_1$-conservativity coincides with
\inty. We also know that any $\Sigma_2$-extension  
of \pra is reflexive (Lemma \ref{lemm:prarefl}). Altogether this means
  that a~statement $\alpha\rhd\beta$ and $\alpha\rhd_{\Pi_1}\beta$ are
  equivalent if $\alpha$ is in $\Sigma_2$ and $\pra + \alpha$ is
  $\Pi_2$-axiomatized, i.e. $\alpha$ is in $\Delta_2$. 

We arrive at Zambella's principle for \inty logic:
\[
\principle{Z} \ \ \ (A \equiv B) \rightarrow A \rhd A \wedge B \ \ \ 
\mbox{for $A$ and $B$ in $\sf BS_1$}
\]
For the $\Pi_1$-conservativity logic of \pra, the principle 
\principle{Z(EP_2^c)} is really informative (see \cite{Bekv04}), it is
the~only principle known on top of the~basic ones for
the~$\Pi_1$-conservativity logic of \pra.  The~principle \principle{Z}
for interpretability logic is very interesting as well but it does
turn out to be derivable in \ilb as we will now proceed to show. (See
however the final remark of this section.) 

Here modal logic again proves to be informative - to have such a~proof
is interesting since it is not at all clear to us how the~two principles
relate arithmetically.  

We shall give a purely syntactical proof of $\extil{B_0} \vdash {\sf
  Z}$, $\principle{B_0}$ being a~restriction of $\principle{B}$ to
${\sf BS_1}$ formulas, see Definition \ref{defn:b_i}. The proof
  in~\cite{jotesis} of the same fact was not correct.

Throughout the~proof we consider a~full disjunctive normal form of modal formulas:

\begin{definition}
A~\emph{full disjunctive normal form} (a~full DNF) over a~finite set
of formulas $\{C_1,\ldots,C_n\}$ is a~disjunction of conjunctions of
the~form $\pm C_1\wedge\ldots\wedge\pm C_n$ where $+C_i$ means $C_i$
and $-C_i$ means $\neg C_i$, i.e., each $C_i$ occurs either positively
or negatively in each disjunct. 
\end{definition}

Each propositional formula is clearly equivalent to a~formula in full
DNF over the~set of propositional atoms occurring in it. Similarly
each modal ${\sf \sf BS_1}$-formula, being a~boolean combination of
boxed formulas, is equivalent to a formula in full DNF over the~set of
its boxed subformulas, or even over any finite set of boxed formulas
containing its boxed subformulas (or just its boxed
subforumulas maximal w.r.t.\ box-depth).  

\begin{theorem}
$\extil{B_0} \vdash \principle{Z}$
\end{theorem}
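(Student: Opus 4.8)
The plan is to exploit that both $A$ and $B$, being ${\sf BS_1}$-formulas, admit a full disjunctive normal form over a common finite set of boxed formulas. Fix the boxed subformulas occurring in $A$ or $B$, say $\Box D_1,\dots,\Box D_n$, and let the \emph{atoms} $\gamma_0,\dots,\gamma_{2^n-1}$ be the full conjunctions $\pm\Box D_1\wedge\dots\wedge\pm\Box D_n$. Treating each $\Box D_s$ as a propositional letter, all boolean identities below are ordinary propositional tautologies, provable in $\il$. Write $A\equiv\dis_{i\in I}\gamma_i$ and $B\equiv\dis_{j\in J}\gamma_j$. Since distinct atoms are contradictory, $A\wedge B\equiv\dis_{i\in I\cap J}\gamma_i$, so by \principle{J3} it suffices to prove $\gamma\rhd A\wedge B$ for every atom $\gamma$ that is a disjunct of $A$ or of $B$.

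For an atom $\gamma$, let $\Box C$ denote the conjunction of its positively occurring boxes (equivalent in $\gl$ to a single boxed formula). The key move is this: from $\gamma\rhd A$ — which holds for a disjunct of $A$ by \principle{J1} — together with $A\rhd B$ we get $\gamma\rhd B$ by \principle{J2}; then \principle{B_0} (applicable since $\gamma\in{\sf BS_1}$) yields $\gamma\wedge\Box C\rhd B\wedge\Box C$, and as $\gamma\to\Box C$ this is simply $\gamma\rhd B\wedge\Box C$. Now $B\wedge\Box C$ is propositionally the disjunction of those atoms of $B$ whose set of positive boxes includes all of $\gamma$'s; if $\gamma$ is not itself a disjunct of $B$, then every such atom has \emph{strictly} more positive boxes than $\gamma$, since $\gamma$ is the unique atom with exactly $\gamma$'s positive part.

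This strict increase drives an induction on the number of positively occurring boxes, treating atoms with the most positive boxes first. I would prove simultaneously, for atoms $\gamma$ that are disjuncts of $A$ or of $B$, the statement $\gamma\rhd A\wedge B$, allowed to assume it for all atoms with strictly more positive boxes. If $\gamma$ is a disjunct of both $A$ and $B$, then $\gamma\to A\wedge B$ and we are done by \principle{J1}. Otherwise, say $\gamma$ is a disjunct of $A$ but not of $B$; by the previous paragraph $\gamma\rhd B\wedge\Box C$, where $B\wedge\Box C$ is a disjunction of atoms of $B$ each with strictly more positive boxes, so the induction hypothesis gives $\gamma'\rhd A\wedge B$ for each of them, \principle{J3} gives $B\wedge\Box C\rhd A\wedge B$, and \principle{J2} gives $\gamma\rhd A\wedge B$. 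The case of a disjunct of $B$ but not $A$ is symmetric, using $B\rhd A$ in place of $A\rhd B$. The base case — all boxes positive — makes $B\wedge\Box C$ the empty disjunction $\bot$, so $\gamma\rhd\bot$ and hence $\gamma\rhd A\wedge B$ by \principle{J1} and \principle{J2}.

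The main obstacle is precisely the bookkeeping of the second paragraph: verifying that \principle{B_0}, applied with the positive part $\Box C$ of $\gamma$, converts $\gamma\rhd B$ into an interpretability statement whose right-hand side is a disjunction of atoms all strictly richer in positive boxes than $\gamma$ (granting $\gamma\notin J$), so that the induction is well-founded. Everything else is routine propositional reasoning together with \principle{J1}--\principle{J3}; the only genuine use of the feature specific to \pra is this single application of \principle{B_0}, and the symmetry between $A$ and $B$ afforded by $A\equiv B$ is exactly what lets the induction close.
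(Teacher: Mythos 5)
Your proof is correct and follows essentially the same route as the paper's: full DNF of $A$ and $B$ over their boxed subformulas, a single application of \principle{B_0} with the positive boxed part $\Box C$ to get $\gamma\rhd B\wedge\Box C$, and a downward induction on the size of the positive box-set, closed symmetrically via $A\equiv B$ and \principle{J1}--\principle{J3}. The only (harmless) cosmetic differences are that you dispatch the ``disjunct of both $A$ and $B$'' case directly by \principle{J1} rather than inside the inductive step, and anchor the induction at atoms with all boxes positive rather than at the maximal box-sets actually occurring.
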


\begin{proof}

Let $A,B \in {\sf BS_1}$ and let $\{A_1,\ldots,A_m\}$ be the~set of boxed subformulas of \emph{both} $A$ and $B$. Assume w.l.o.g. that $A$ and $B$ are in full DNF over $\{A_1,\ldots,A_m\}$. Assume $A\equiv B$. We show that $A\rhd A\wedge B$. Since $A$ comes in full DNF, this means to show, for each disjunct $D$ of $A$, that $D\rhd A\wedge B$. In fact, we show this for any disjunct of $A$ or $B$.

A~disjunct $D$ of either $A$ or $B$ is fully determined by the~set $D^{\Box}$ of boxed formulas occurring positively in it. We shall write $D^{\Box}$ also for the~conjunction of its members.
\medskip\par\noindent
We first show, if $D$ is a member of $A$ or $B$ which has a~maximal set $D^{\Box}$ (no disjunct $E$ with $E^{\Box}$ properly containing $D^{\Box}$ occurs in $A$ or $B$) then $D\rhd A\wedge B$:

Suppose such $D$ is in $A$, the~other case is symmetrical. Since $D\rhd A$ we have also $D\rhd B$. Then, noting that $D^{\Box}$ is a~conjunction of boxed formulas and applying $\principle{B_0}$, we obtain $D\rhd B\wedge D^{\Box}$.

Now take any disjunct $E$ of $B$ for which $E^{\Box}$ does not contain $D^{\Box}$. Then $E$ contradicts $D^{\Box}$ by its negative part. We distinguish two cases: if for all $E$ in $B$ the~set $E^{\Box}$ does not contain $D^{\Box}$, then $B$ contradicts $D^{\Box}$. It follows from  $D\rhd B\wedge D^{\Box}$ that $D\rhd\bot$. Then clearly $D\rhd A\wedge B$.

Otherwise $B$ does contain $E$ with $E^{\Box}$ containing $D^{\Box}$. But since $D$ has a maximal Box-set, $E$ and $D$ must be the same and $D$ occurs in $B$ as well. Thus $D\rhd B\wedge D$ and, since $\vdash D\to A$, also $D\rhd A\wedge B$.\\
We have shown that all maximal disjuncts interpret $A\wedge B$.
\medskip\par\noindent
We show by induction that the~same is true for all other disjuncts of $A$ and $B$. This suffices for the~proof.

Assume that, for all $k'$ with $m\geq k'>k$ and all disjuncts  $D$ in either $A$ or $B$ with $D^{\Box}$ of size $k'$, $D\rhd A\wedge B$ (this has already been shown for $k$ equal to the~size of the~maximal Box-set in $A$ and in $B$ which is certainly less then $m$). Consider a~disjunct $D$ of $A$, the other case is again symmetrical. Assume w.l.o.g. that $D^{\Box}$ has size $k$. We have to show $D\rhd A\wedge B$:\\
Since $D\rhd A$ and hence $D\rhd B$, we again have that $D\rhd B\wedge
D^{\Box}$. Now $D^{\Box}$ conflicts with all the~disjuncts of $B$,
Box-set of which is not a~superset of $D^{\Box}$. Again, we
distinguish two cases: if there are no disjuncts of $B$  with a
Box-set which is a superset of $D^{\Box}$ then $B$ conflicts with
$D^{\Box}$ and $D\rhd\bot$ and thus $D\rhd A\wedge B$. 

Otherwise some disjuncts of $B$ do have a Box-set which is a superset
of $D^{\Box}$. Let $E_1,\ldots,E_l$ be all such disjuncts of
$B$. Then, since $D\rhd B\wedge D^{\Box}$ and $\vdash B\wedge
D^{\Box}\to E_1\vee\ldots\vee E_l$ (where $E_1\vee\ldots\vee E_l$ is
the~part of $B$ not conflicting with $D^{\Box}$), we obtain $D\rhd
E_1\vee\ldots\vee E_l$. Now it suffices to show that each $E_i$
interprets $A\wedge B$. 

Fix an $E_i$ and suppose $E_i^{\Box}$ have size $k$. But then $E_i = D$ and thus we have, as before, $D\rhd (B\wedge D)\rhd (B\wedge A)$.
If $E_i^{\Box}$ have size greater then $k$, the~induction hypothesis apply and we obtain that $E_i$
interprets $A\wedge B$.

\end{proof}

Actually it is possible to
extend Zambella's principle somewhat in such a way that it is no
longer clear whether the result is still derivable from
\principle{B}. First note that the formulas in ${\sf ES_2}$ are just
the propositional
combinations of $\Box$-formulas. 

Zambella's principle for interpretability logic as studied in this paper reads
\[
A \equiv B \rightarrow A\rhd A \wedge B
\]
where $A$ and $B$ should both be ${\sf BS}_1$. 
However, to have access to the ideas behind Zambella's principle, it is sufficient that $A$ and $B$ be both provably of complexity $\Delta_2$. We can thus look at those $\sf{ES}_2$ formulae who are provably equivalent to the negation of some other $\sf{ES}_2$ formula and plug those formulae in. Reflecting this thought in a formula yields\footnote{We would like to thank one of the referees for pointing out that our original extension of Zambella's principle for interpetability logic could actually be even generalized to its current form.}
\[
\Box ((A \leftrightarrow A')\wedge (B\leftrightarrow B')) \to (A \equiv B \rightarrow A\rhd A \wedge B)
\]
where $A$, $A'$, $B$ and $B'$ are all from $\sf{ES}_2$. It actually makes sense to call this principle \emph{the} Zambella principle for interpretability logic as it more precisely reflects the arithmetical ingredients. We have chosen not to do so as to be consistent with earlier papers.

%
%
%

\section{Delimitation of \intl{\pra}}

Let us see what we can conclude about \intl{\pra} from
the~above. Certainly \intl{\pra} includes \ilal but it is more than
that because \principle{B} is not a principle of \ilal. The latter is clear
from the~fact that $\ilal\subseteq\ilm\cap\ilp$ and \principle{Z} is
not in \ilp: consider the~following model: 

\begin{figure}[ht]

\begin{center}

\tikzstyle{circl}=[circle, fill=black!100,thick,inner sep=0pt,minimum
  size=1mm]

\tikzstyle{circu}=[circle, draw=black!100,thick,inner sep=0pt,minimum
  size=1mm]

\begin{tikzpicture}

\pgfputat{\pgfxy(0.2,-0.1)}{\pgfbox[center,center]{\bf $w$}};
\pgfputat{\pgfxy(-0.8,2)}{\pgfbox[center,center]{\bf $p$}};
\pgfputat{\pgfxy(1.2,2)}{\pgfbox[center,center]{\bf $q$}};

\node[circl] (1) at ( 0,0) {};

\node[circl] (2) at ( -1,1) {};

\node[circl] (3) at ( 1,1) {};

\node[circl] (4) at ( -1,2) {};

\node[circl] (5) at ( 1,2) {};

\draw [->,thick] (1) -- (2);

\draw [->, thick] (1) -- (3);

\draw [->,thick] (2) -- (4);

\draw [->, thick] (3) -- (5);

\draw [<->,thick,snake=snake, segment length=3mm, line after
  snake=1mm, line before snake=1mm] (2) -- (3)

node [below,text width=3cm,text centered,midway]{$S_w$};

\end{tikzpicture}

\end{center}


\end{figure}

\noindent We have $w\Vdash\Diamond p\equiv\Diamond q$ and $w\nVdash p\rhd p\wedge
q$, thus Zambella fails. The model is clearly an~\ilp model.  

This shows, by derivability of \principle{Z} from \principle{B}, that
indeed \principle{B} is not a principle of \ilal. 

Also we know that \intl{\pra} is not \ilm since \principle{M} is not
in \intl{\pra}, as A.\ Visser discusses in \cite{vis97}: the~two logics
cannot be the~same because if \ilm is a~part of the~\inty logic of
a~theory then it is a~part of the~\inty logic of any of its finite
extensions as well. This cannot be the~case for \pra because not all
of its finite extensions are reflexive. A~more specific example of
a~principle of \ilm which is not in \intl{\pra} can be given:  
\[
A\rhd \Diamond B \to \Box(A\rhd\Diamond B).
\]
That this~formula is not in \intl{\pra} can be shown using Shavrukov's
result from \cite{Sha97} about complexity of the~set
$\{\psi|\psi\in\Pi_1\;\&\;\phi\rhd\psi\}$; see \cite{vis97} for
the~full proof. 

\medskip

We know that $\principle{M_0}$ is provable in \ilb. The~other
principles surely contained in \intl{\pra} are \principle{B}, 
\principle{R} and \principle{W} (\principle{R^*} is the conjunction of
\principle{R} and \principle{W}). Let us show they are mutually
independent. Note that for nonderivability proofs soundness
suffices. 

First let us recall the~frame conditions for the~two principles $\principle{W}$ and $\principle{R}$. The~condition for $\principle{W}$ requires that the~composition $(S_w;R)$ is conversely well-founded, the~condition for $\principle{R}$ is the~following: $xRyRzS_xuRv\Rightarrow zS_yv$.

\begin{description}
 \item[\principle{W} vs.\ \principle{B}:] It is easy to see that
 $\principle{W}\nvdash\principle{B}$ since the~former is in \ilal while
 the~later is not in it. Since $\principle{R}$ is in \ilal as well, $\principle{W}, \principle{R}\nvdash\principle{B}$. The~following frame
\begin{figure}[ht]
\begin{center}
\tikzstyle{circl}=[circle, fill=black!100,thick,inner sep=0pt,minimum size=1mm]
\tikzstyle{circu}=[circle, draw=black!100,thick,inner sep=0pt,minimum size=1mm]
\begin{tikzpicture}[segment aspect=.2]
\pgfputat{\pgfxy(0.2,-0.1)}{\pgfbox[center,center]{\bf $w$}};
\pgfputat{\pgfxy(0.2,1)}{\pgfbox[center,center]{\bf $x$}};
\pgfputat{\pgfxy(0.2,2)}{\pgfbox[center,center]{\bf $y$}};
\pgfputat{\pgfxy(1.2,1)}{\pgfbox[center,center]{\bf $z$}};

\node[circl] (1) at ( 0,0) {};
\node[circl] (2) at ( 0,1) {};
\node[circl] (3) at ( 0,2) {};
\node[circl] (4) at ( 1,1) {};

\draw [->,thick] (1) -- (2);
\draw [->,thick] (1) -- (4);
\draw [<-,thick] (3) ..controls (-0.5,1.8) and (-0.5,1.2).. (2);
\draw [<->,thick,snake=snake,line after snake=1mm, line before snake=1mm, segment length=2mm] (2) -- (3)
node [right,text width=0.5cm,text centered,midway]
{$S_w$};

\end{tikzpicture}
\end{center}
\end{figure}

\noindent is an~\ilb frame and it violates the~frame
condition for \principle{W}: $wRxRy$ and $xS_wyS_wx$ and $wRz$. Now
$z$ is bi-similar to $y$ and \principle{B} is ensured. Thus $\principle{B}\nvdash\principle{W}$.

Moreover, the same frame, being an $\principle{R}$ frame, shows that $\principle{B}, \principle{R}\nvdash\principle{W}$: the~only case to check is $wRxRyS_w xRy$. Now the~condition for $\principle{R}$ requires $yS_x y$, but this is clearly the~case since $S_x$ is reflexive over $x$.
\\ \medskip
\item[\principle{R} vs.\ \principle{B}:] Again, since
  $\principle{R}\in\ilal$, it cannot be that
  $\principle{R}\vdash\principle{B}$. We have already discussed that neither $\principle{R}, \principle{W}\vdash\principle{B}$. The~following frame 
\begin{figure}[h]
\begin{center}
\tikzstyle{circl}=[circle, fill=black!100,thick,inner sep=0pt,minimum size=1mm]
\tikzstyle{circu}=[circle, draw=black!100,thick,inner sep=0pt,minimum size=1mm]
\begin{tikzpicture}
\pgfputat{\pgfxy(0.2,-0.1)}{\pgfbox[center,center]{\bf $x$}};
\pgfputat{\pgfxy(-2.2,2)}{\pgfbox[center,center]{\bf $z'$}};
\pgfputat{\pgfxy(-0.2,1)}{\pgfbox[center,center]{\bf $y$}};
\pgfputat{\pgfxy(-0.2,2)}{\pgfbox[center,center]{\bf $z$}};
\pgfputat{\pgfxy(2.2,2)}{\pgfbox[center,center]{\bf $u$}};
\pgfputat{\pgfxy(2.2,3)}{\pgfbox[center,center]{\bf $v$}};

\node[circl] (1) at ( 0,0) {};
\node[circl] (2) at ( -2,2) {};
\node[circl] (3) at ( 0,1) {};
\node[circl] (4) at ( 0,2) {};
\node[circl] (5) at ( 2,2) {};
\node[circl] (6) at ( 2,3) {};

\draw [->,thick] (1) -- (2);
\draw [->, thick] (1) -- (3);
\draw [->,thick] (3) -- (4);
\draw [->, thick] (1) -- (5);
\draw [->, thick] (5) -- (6);
\draw [->, thick] (3) -- (6);
\draw [->,thick,snake=snake, segment length=3mm, line after snake=1mm] (4) -- (5)
node [below,text width=3cm,text centered,midway]
{$S_x$};
\end{tikzpicture}
\end{center}
\end{figure}

\noindent is an \ilb-frame violating the~frame condition of
\principle{R}: We have a~basic situation violating $\principle{R}$, which is $xRyRzS_xuRv$ and
$\neg zS_yv$. To ensure \principle{B} for $y$ we add an~arrow $yRv$,
to ensure \principle{B} for $z$, we add a~bi-similar world $z'$ such
that $xRz'$ and $z'$ has no successors at all.  

Moreover, since the frame is clearly a~$\principle{W}$ frame as well, we have shown that $\principle{B},\principle{W}\nvdash\principle{R}$.
\\ \medskip
\item[\principle{R} vs.\ \principle{W}:] already discussed in \cite{jogo04}.
 \end{description}
 
It is clear from our exposition that, though we have solved a number
of problems concerning 
\intl{\pra},  many remain open, e.g.\ those connected with our incomplete
knowledge of 
\ilal. Also, we lack a modal completeness theorem for \ilb. Unfortunately,
the complexity of the frame condition for \principle{B} makes this
seem an intractable problem at the present time. In any case, the
logic of interpetability is far from being a~finished 
subject.


\end{document}